\def\thm@space@setup{\thm@preskip=4pt \thm@postskip=3pt}
\colorlet{MyBlue}{DodgerBlue!75!Black}
\colorlet{MyGreen}{DarkGreen!85!Black}
\crefname{assumption}{Assumption}{Assumptions}
\crefname{proofstep}{Step}{Steps}
\DeclarePairedDelimiter{\braces}{\{}{\}}					
\DeclarePairedDelimiter{\bracks}{[}{]}					
\DeclarePairedDelimiter{\abs}{\lvert}{\rvert}						
\DeclarePairedDelimiter{\norm}{\lVert}{\rVert}					
\DeclarePairedDelimiterXPP{\dnorm}[1]{}{\lVert}{\rVert}{_{\ast}}{#1}	
\DeclarePairedDelimiterX{\braket}[2]{\langle}{\rangle}{#1,#2}		
\DeclarePairedDelimiterX{\product}[2]{\langle}{\rangle}{#1,#2}		
\DeclarePairedDelimiterX{\setdef}[2]{\{}{\}}{#1:#2}					
\DeclarePairedDelimiterXPP{\exclude}[1]{\mathopen{}\setminus}{\{}{\}}{}{#1}
\newcommand{\eg}{e.g.,\xspace}						
\newcommand{\ie}{i.e.,\xspace}						
\newcommand{\textpar}[1]{\textup(#1\textup)}					
\newcommand{\alt}[1]{#1'}							
\newcommand{\dual}[1]{#1^{\ast}}						
\newcommand{\est}[1]{\hat #1}						
\newcommand{\sol}[1][\state]{#1^{\ast}}					
\DeclareMathOperator{\bigoh}{\mathcal O}						
\newcommand{\R}{\mathbb{R}}						
\newcommand{\vdim}{\debug d}							
\newcommand{\vecspace}{\mathcal{\debug V}}					
\newcommand{\dspace}{\vecspace^{\ast}}					
\DeclareMathOperator{\dist}{dist}						
\newcommand{\base}{\debug p}							
\newcommand{\ball}{\mathbb{B}}						
\newcommand{\sphere}{\mathbb{S}}					
\newcommand{\nhd}{\mathcal{\debug U}}							
\DeclareMathOperator{\tcone}{TC}						
\DeclareMathOperator{\dcone}{\tcone^{\ast}}				
\DeclareMathOperator{\pcone}{\mathrm{PC}}				
\newcommand{\feas}{\mathcal{\debug X}}						
\newcommand{\sols}{\sol[\feas]}
\newcommand{\obj}{\debug f}							
\newcommand{\sobj}{\debug F}							
\newcommand{\state}{\debug x}							
\newcommand{\states}{\feas}
\newcommand{\dstate}{\debug y}							
\newcommand{\dstates}{\mathcal{\debug Y}}
\newcommand{\payv}{\debug v}							
\DeclareMathOperator{\Eucl}{\debug \Pi}						
\DeclareMathOperator{\logit}{\debug \Lambda}					
\newcommand{\fench}{\debug F}							
\newcommand{\hreg}{\debug h}							
\newcommand{\mirror}{\debug Q}							
\newcommand{\strong}{\debug K}							
\newcommand{\act}{\debug X}							
\newcommand{\score}{\debug Y}							
\newcommand{\step}{\debug \gamma}						
\DeclareMathOperator{\ex}{\mathbb{E}}					
\DeclareMathOperator{\prob}{\mathbb{P}}				
\DeclareMathOperator{\simplex}{\Delta}					
\newcommand{\as}{\textpar{a.s.}\xspace}				
\newcommand{\filter}{\mathcal{\debug F}}						
\newcommand{\noise}{\debug U}							
\newcommand{\snoise}{\debug \xi}							
\newcommand{\noisedev}{\debug \sigma}						
\newcommand{\noisevar}{\noisedev^{2}}					
\newcommand{\sample}{\debug \omega}						
\newcommand{\samples}{\debug \Omega}						
\providecommand\given{}							
\DeclarePairedDelimiterXPP{\exof}[1]{\ex}{[}{]}{}{
\renewcommand\given{\nonscript\,\delimsize\vert\nonscript\,\mathopen{}} #1}
\DeclarePairedDelimiterXPP{\probof}[1]{\prob}{(}{)}{}{
\renewcommand\given{\nonscript\,\delimsize\vert\nonscript\,\mathopen{}} #1}
\DeclareMathOperator*{\argmax}{arg\,max}						
\DeclareMathOperator*{\argmin}{arg\,min}						
\DeclareMathOperator{\im}{im}						
\newcommand{\from}{\colon}							
\newcommand{\start}{\debug 1}							
\newcommand{\running}{\debug 1,2,\dotsc}						
\newcommand{\run}{\debug n}							
\newcommand{\runalt}{\debug k}							
\newcommand{\eps}{\varepsilon}						
\newcommand{\pd}{\partial}						
\theoremstyle{plain}
\newtheorem{theorem}{Theorem}
\newtheorem{corollary}[theorem]{Corollary}
\newtheorem*{corollary*}{Corollary}
\newtheorem{lemma}[theorem]{Lemma}
\newtheorem{proposition}[theorem]{Proposition}
\theoremstyle{definition}
\newtheorem{definition}[theorem]{Definition}
\newtheorem*{definition*}{Definition}
\newtheorem{assumption}{Assumption}
\theoremstyle{remark}
\newtheorem*{remark*}{Remark}
\newtheorem*{notation*}{Notational remark}
\newtheorem{example}{Example}
\newenvironment{proofof}[1]{\begin{proof}[#1]}{\end{proof}}
\numberwithin{equation}{section}
\numberwithin{theorem}{section}
\numberwithin{remark}{section}
\numberwithin{example}{section}
\newcounter{proofstep}
\newenvironment{proofstep}[1]
{\vspace{3pt}
\refstepcounter{proofstep}%
\par\textit{Step~\arabic{proofstep}:~#1}.\,}
{\vspace{2pt}}
\newcommand{\debug}[1]{#1}						
\newcommand{\const}{\debug c}
\newcommand{\argdot}{\cdot}
\newcommand{\intsimplex}{\simplex^{\circ}}
\newcommand{\region}{\mathcal{\debug C}}
\newcommand{\pspace}{(\samples,\filter,\prob)}
\newcommand{\fball}{\ball_{\fench}}
\newcommand{\vbound}{\debug V}
\newcommand{\dsol}{\dstate^{\ast}}
\begin{document}


\title
[Mirror descent beyond convex programming]
{On the convergence of mirror descent\\beyond stochastic convex programming}

\author
[Z.~Zhou]
{Zhengyuan Zhou$^{\ast}$}
\address{$^{\ast}$\,%
Department of Electrical Engineering, Stanford University.}
\author
[P.~Mertikopoulos]
{Panayotis Mertikopoulos$^{\S}$}
\address{$^{\S}$\,%
Univ. Grenoble Alpes, CNRS, Inria, LIG, 38000, Grenoble, France.}
\author
[N.~Bambos]
{\\Nicholas Bambos$^{\ast}$}
\author
[S.~Boyd]
{Stephen Boyd$^{\ast}$}
\author
[P.~W.~Glynn]
{Peter W.~Glynn$^{\ast}$}

%
%
\thanks{%
Part of this work was presented in the 31st International Conference on Neural Information Processing Systems (NIPS 2017) \cite{ZMBB+17-NIPS}.
P.~Mertikopoulos was partially supported by
the French National Research Agency (ANR) grant
ORACLESS (ANR\textendash 16\textendash CE33\textendash 0004\textendash 01).}
\thanks{The authors are indebted to two anonymous referees for their insightful suggestions and remarks.}

\subjclass[2010]{Primary 90C15, 90C26, secondary 90C25, 90C05.}
\keywords{%
Mirror descent;
non-convex optimization;
stochastic optimization;
stochastic approximation;
variational coherence.}

\newcommand{\acli}[1]{\textit{\acl{#1}}}
\newcommand{\acdef}[1]{\textit{\acl{#1}} \textup{(\acs{#1})}\acused{#1}}
\newcommand{\acdefp}[1]{\emph{\aclp{#1}} \textup(\acsp{#1}\textup)\acused{#1}}

\newacro{OD}[O/D]{origin-destination}
\newacro{VI}{variational inequality}
\newacroplural{VI}{variational inequalities}
\newacro{GAN}{generative adversarial network}
\newacro{EW}{exponential weights}
\newacro{VC}{variational coherence}
\newacro{SMD}{stochastic mirror descent}
\newacro{SGD}{stochastic gradient descent}
\newacro{ODE}{ordinary differential equation}
\newacro{APT}{asymptotic pseudotrajectory}
\newacroplural{APT}{asymptotic pseudotrajectories}
\newacro{MD}{mirror descent}
\newacro{DA}{dual averaging}
\newacro{LHS}{left-hand side}
\newacro{RHS}{right-hand side}
\newacro{NE}{Nash equilibrium}
\newacroplural{NE}[NE]{Nash equilibria}
\newacro{SA}{stochastic approximation}
\newacro{VS}{variational stability}
\newacro{LLN}{law of large numbers}
\newacro{MDS}{martingale difference sequence}
\newacro{iid}[i.i.d.]{independent and identically distributed}


\begin{abstract}
\acused{SMD}
In this paper, we examine the convergence of mirror descent in a class of stochastic optimization problems that are not necessarily convex (or even quasi-convex), and which we call \emph{variationally coherent}.
Since the standard technique of ``ergodic averaging'' offers no tangible benefits beyond convex programming, we focus directly on the algorithm's last generated sample (its ``last iterate''), and we show that it converges with probabiility $1$ if the underlying problem is coherent.
We further consider a localized version of \acl{VC} which ensures local convergence of \ac{SMD} with high probability.
These results contribute to the landscape of nonconvex stochastic optimization by showing that \mbox{(quasi-)}convexity is not essential for convergence to a global minimum:
rather, \acl{VC}, a much weaker requirement, suffices.
Finally, building on the above, we reveal an interesting insight regarding the convergence speed of \ac{SMD}:
in problems with sharp minima (such as generic linear programs or concave minimization problems), \ac{SMD} reaches a minimum point \emph{in a finite number of steps} \as, even in the presence of persistent gradient noise.
This result is to be contrasted with existing black-box convergence rate estimates that are only asymptotic.
\end{abstract}

\maketitle
\renewcommand{\sharp}{\debug\rho}
\acresetall
\acused{iid}
\allowdisplaybreaks

\section{Introduction}
\label{sec:introduction}

Stochastic mirror descent (\acs{SMD})\acused{SMD}
and its variants arguably comprise one of the most widely used families of first-order methods in stochastic optimization \textendash\ convex and non-convex alike \cite{NY83,BecTeb03,CT93,CLP12,DAJJ12,LNS12,MBNS17,MS18,MZ18,NL14,Nem04,NJLS09,Nes09,SS11,Xia10}.
Heuristically, in the ``\acl{DA}'' (or ``lazy'') incarnation of the method \cite{Nes09,Xia10,SS11},
\ac{SMD} proceeds by aggregating a sequence of \ac{iid} gradient samples and then mapping the result back to the problem's feasible region via a specially constructed ``mirror map'' (the namesake of the method).
In so doing, \ac{SMD} generalizes and extends the classical \ac{SGD} algorithm (with Euclidean projections playing the role of the mirror map) \cite{RM51,Pol87,Nes04},
the exponentiated gradient method of \cite{KW97},
the matrix regularization schemes of \cite{TRW05,KSST12,MBNS17},
and many others.

Starting with the seminal work of Nemirovski and Yudin \cite{NY83}, the convergence of \acl{MD} has been studied extensively in the context of
convex programming (including distributed and stochastic optimization problems) \cite{BecTeb03,NJLS09,Nes09,Xia10},
non-cooperative games\hspace{1pt}/\hspace{1pt}saddle-point problems \cite{NJLS09,Nes09,MZ18},
and
monotone \acp{VI} \cite{Nem04,Nes09,JNT11}.
In this monotone setting, it is customary to consider the so-called ``ergodic average'' $\bar\act_{\run} = \sum_{\runalt=1}^{\run} \step_{\runalt} \act_{\runalt} \big/ \sum_{\runalt=1}^{\run} \step_{\runalt}$ of the algorithm's generated sample points $\act_{\run}$, with $\step_{\run}$ denoting the method's step-size.
The reason for this is that, by Jensen's inequality, convexity guarantees that a regret-based analysis can lead to explicit convergence rates for $\bar\act_{\run}$ \cite{NJLS09,Nes09,Xia10,SS11}.
However, this type of averaging provides no tangible benefits in non-convex programs, so it seems more natural to focus directly on the algorithm's last generated sample \textendash\ its ``last iterate''.



The long-term behavior of the last iterate of \ac{SMD} was recently studied by Shamir and Zhang \cite{SZ13} and Nedic and Lee \cite{NL14} in the context of strongly convex problems.
In this case, the algorithm's last iterate achieves the same value convergence rate as its ergodic average, so averaging is not more advantageous.
Jiang and Xu \cite{JX08} also examined the convergence of the last iterate of \ac{SGD} in a class of (not necessarily monotone) \aclp{VI} that admit a unique solution, and they showed that it converges to said solution with probability $1$.
In a very recent paper \cite{chen2018}, it was shown that in phase retrieval problems (a special class of non-convex problems that involve systems of quadratic equations), \ac{SGD} with random initialization converges to global optimal solutions with probability $1$.
Finally, in general non-convex problems, Ghadimi and Lan \cite{GL13,GL16} showed that running \ac{SGD} with a randomized stopping time guarantees convergence to a critical point in the mean, and they estimated the speed of this convergence.
However, beyond these (mostly recent) results, not much is known about the convergence of the individual iterates of \acl{MD} in non-convex programs.

\subsection*{Our contributions}
\label{sec:contributions}

In this paper, we examine the asymptotic behavior of \acl{MD} in a class of stochastic optimization problems that are not necessarily convex (or even quasi-convex).
This class of problems, which we call \emph{variationally coherent} (\acs{VC}), are related to a class of \aclp{VI} studied by Jiang and Xu \cite{JX08} and, earlier, by Wang et al. \cite{WXW01} \textendash\ though, importantly, we do not assume here the existence of a unique solution.
Focusing for concreteness on the \acdef{DA} variant of \ac{SMD} (also known as ``lazy'' \acl{MD}) \cite{Nes09,Xia10,SS11}, we show that the algorithm's last iterate converges to a global minimum with probability $1$ under mild assumptions for the algorithm's gradient oracle (unbiased \ac{iid} gradient samples that are bounded in $L^{2}$).
This result can be seen as the ``mirror image'' of the analysis of \cite{JX08} and reaffirms that \mbox{(quasi-)}convexity/monotonicity is not essential for convergence to a global optimum point:
the weaker requirement of \acl{VC} suffices.

To extend the range of our analysis, we also consider a localized version of \acl{VC} which includes multi-modal functions that  are not even \emph{locally} \mbox{(quasi-)}convex near their minimum points (so, in particular, an eigenvalue-based analysis cannot be readily applied to such problems).
Here, in contrast to the globally coherent case, a single, ``unlucky'' gradient sample could drive the algorithm away from the ``basin of attraction'' of a local minimum (even a locally coherent one), possibly never to return.
Nevertheless, we show that, with overwhelming probability, the last iterate of \ac{SMD} converges locally to minimum points that are locally coherent (for a precise statement, see \cref{sec:local}).

Going beyond this ``black-box'' analysis, we also consider a class of optimization problems that admit \emph{sharp} minima, a fundamental notion due to Polyak \cite{Pol87}.
In stark contrast to existing ergodic convergence rates (which are asymptotic in nature),
we show that the last iterate of \ac{SMD} converges to sharp minima of variationally coherent problems in an almost surely \emph{finite} number of iterations, provided that the method's mirror map is surjective.
As an important corollary, it follows that the last iterate of (lazy) \acl{SGD} attains a solution of a stochastic linear program in a finite number of steps \as.
For completeness, we also derive a localized version of this result for problems with sharp local minima that are not globally coherent:
in this case, convergence in a finite number of steps is retained but, instead of ``almost surely'', convergence now occurs with overwhelming probability.

Important classes of problems that admit sharp minima are generic linear programs (for the global case) and concave minimization problems (for the local case).
In both instances, the (fairly surprising) fact that \ac{SMD} attains a minimizer in a finite number of iterations should be contrasted to existing work on stochastic linear programming which exhibits asymptotic convergence rates \cite{ARH97,Van07}.
We find this result particularly appealing as it highlights an important benefit of working with ``lazy'' descent schemes:
``greedy'' methods (such as vanilla gradient descent) always take a gradient step from the last generated sample, so convergence in a finite number of iterations is a priori impossible in the presence of persistent noise.
By contrast, the aggregation of gradient steps in ``lazy'' schemes means that even a ``bad'' gradient sample might not change the algorithm's sampling point (if the mirror map is surjective), so finite-time convergence \emph{is} possible in this case.

Our analysis hinges on the construction of a primal-dual analogue of the Bregman divergence which we call the \emph{Fenchel coupling}, and which tracks the evolution of the algorithm's (dual) gradient aggregation variable relative to a target point in the problem's (primal) feasible region.
This energy function allows us to perform a quasi-Fejérian analysis of \acl{SMD}
and, combined with a series of (sub)martingale convergence arguments, ultimately yields the convergence of the algorithm's last iterate \textendash\ first as a subsequence, then with probability $1$.

\section{Problem setup and basic definitions}
\label{sec:setup}

\subsection{The main problem}
\label{subsec:problem}

Let $\states$ be a convex compact subset of a $\vdim$-di\-men\-sio\-nal vector space $\vecspace$ with norm $\norm{\argdot}$.
Throughout this paper, we will focus on stochastic optimization problems of the general form
\begin{equation}
\label{eq:opt}
\tag{Opt}
\begin{aligned}
\text{minimize}
&\quad
\obj(\state),
\\
\text{subject to}
&\quad
\state\in\states,
\end{aligned}
\end{equation}
where
\begin{equation}
\label{eq:obj}
\obj(\state)
	= \exof{\sobj(\state;\sample)}
\end{equation}
for some stochastic objective function $\sobj\from\states\times\samples\to\R$ defined on an underlying (complete) probability space $\pspace$.
In terms of regularity, our blanket assumptions for \eqref{eq:opt} will be as follows:

\begin{assumption}
\label{asm:diff}
$\sobj(\state, \sample)$ is continuously differentiable in $\state$ for almost all $\sample\in\samples$.
\end{assumption}

\begin{assumption}
\label{asm:L2}
The gradient of $\sobj$ is uniformly bounded in $L^{2}$, \ie $\exof{\dnorm{\nabla\sobj(\state;\sample)}^{2}} \leq \vbound^{2}$ for some finite $\vbound\geq0$ and all $\state\in\states$.
\end{assumption}
\vspace{1ex}

\begin{remark*}
In the above, gradients are treated as elements of the dual space $\dstates\equiv\dual\vecspace$ of $\vecspace$, and $\dnorm{\dstate} = \sup\setdef{\braket{\dstate}{\state}}{\norm{\state}\leq1}$ denotes the dual norm of $\dstate\in\dstates$.
We also note that $\nabla\sobj(\state;\sample)$ refers to the gradient of $\sobj(\state;\sample)$ with respect to $\state$;
since $\samples$ is not assumed to carry a differential structure, there is no danger of confusion.
\end{remark*}
\vspace{1ex}

\cref{asm:diff} is a token regularity assumption which can be relaxed to account for nonsmooth objectives by using subgradient devices (as opposed to gradients).
However, this would make the presentation significantly more cumbersome, so we stick with smooth objectives throughout.
\cref{asm:L2} is also standard in the stochastic optimization literature:
it holds trivially if $\sobj$ is uniformly Lipschitz (another commonly used condition) and, by the dominated convergence theorem, it further implies that $\obj$ is smooth and $\nabla\obj(\state) = \nabla\exof{\sobj(\state;\sample)} = \exof{\nabla\sobj(\state;\sample)}$ is bounded.
As a result, the solution set
\begin{equation}
\label{eq:solset}
\sols
	= \argmin\obj
\end{equation}
of \eqref{eq:opt} is closed and nonempty (by the compactness of $\states$ and the continuity of $\obj$).

\vspace{1ex}
We briefly discuss below two important examples of \eqref{eq:opt}:
\vspace{1ex}

\begin{example}[Distributed optimization]
\label{ex:distributed}
An important special case of \eqref{eq:opt} with high relevance to statistical inference, signal processing and machine learning is when $\obj$ is of the special form
\begin{equation}
\label{eq:obj-distributed}
\obj(\state)
	= \frac{1}{N} \sum_{i=1}^{N} \obj_{i}(\state),
\end{equation}
for some family of functions (or training samples) $\obj_{i}\from\feas\to\R$, $i=1,\dotsc,N$.
As an example, this setup corresponds to empirical risk minimization with uniform weights, the sample index $i$ being drawn with uniform probability from $\{1,\dotsc,N\}$.
\end{example}
\vspace{1ex}

\begin{example}[Noisy gradient measurements]
\label{ex:noisy}
Another widely studied instance of \eqref{eq:opt} is when
\begin{equation}
\label{eq:sobj-noisy}
\sobj(\state;\noise)
	= \obj(\state) + \braket{\noise}{x}
\end{equation}
for some random vector $\noise$ such that $\exof{\noise} = 0$ and $\exof{\dnorm{\noise}^{2}} < \infty$.
This gives $\nabla\sobj(\state;\noise) = \nabla\obj(\state) + \noise$, so \eqref{eq:opt} can be seen here as a model for deterministic optimization problems with noisy gradient measurements.
\end{example}
\vspace{1ex}

\subsection{Variational coherence}
\label{subsec:VC}

We are now in a position to define the class of variationally coherent problems:

\begin{definition}
\label{def:VC}
We say that \eqref{eq:opt} is \emph{variationally coherent} if
\begin{equation}
\label{eq:VC}
\tag{VC}
\braket{\nabla\obj(\state)}{\state - \sol}
	\geq 0
	\quad
	\text{for all $\state\in\states$, $\sol\in\sols$},
\end{equation}
and there exists some $\sol\in\sols$ such that equality holds in \eqref{eq:VC} only if $\state \in \sols$.
\end{definition} 

In words, \eqref{eq:VC} states that solutions of \eqref{eq:opt} can be harvested by solving a (Minty) \acl{VI} \textendash\ hence the term ``variational coherence''.
To the best of our knowledge, the closest analogue to this condition first appeared in the classical paper of Bottou \cite{Bot98} on online learning and stochastic approximation algorithms, but with the added assumptions that
\begin{inparaenum}
[\itshape a\upshape)]
\item
the problem \eqref{eq:opt} admits a unique solution $\sol$;
and
\item
an extra positivity requirement for $\braket{\nabla\obj(\state)}{\state - \sol}$ in punctured neighborhoods of $\sol$.
\end{inparaenum}
In the context of \aclp{VI}, a closely related variant of \eqref{eq:VC} has been used to establish the convergence of extra-gradient methods \cite{WXW01,FP03} and \acl{SGD} \cite{JX08} in (Stampacchia) \aclp{VI} with a unique solution.
By contrast, there is no uniqueness requirement in \eqref{eq:VC}, an aspect of the definition which we examine in more detail below.

We should also note that, as stated, \eqref{eq:VC} is a non-random requirement for $\obj$ so it applies equally well to \emph{deterministic} optimization problems.
Alternatively, by the dominated convergence theorem, \eqref{eq:VC} can be written equivalently as
\begin{equation}
\label{eq:VC-stoch}
\exof{\braket{\nabla\sobj(\state;\sample)}{\state - \sol}}
	\geq 0,
\end{equation}
so it can be interpreted as saying that $\sobj$ is variationally coherent ``on average'', without any individual realization thereof satisfying \eqref{eq:VC}.
Both interpretations will come in handy later on.

All in all, the notion of \acl{VC} will play a central role in our paper so a few examples are in order:
\vspace{1ex}

\begin{example}[Convex programming]
\label{ex:cvx}
If $\obj$ is convex, $\nabla\obj$ is \emph{monotone} \cite{RW98} in the sense that
\begin{equation}
\label{eq:mono}
\braket{\nabla\obj(\state) - \nabla\obj(\alt\state)}{\state - \alt\state} \geq 0
	\quad
	\text{for all $\state,\alt\state \in \states$}.
\end{equation}
By the first-order optimality conditions for $\obj$, it follows that $\braket{\obj(\sol)}{\state - \sol} \geq 0$ for all $\state\in\states$.
Hence, by monotonicity, we get
\begin{equation}
\label{eq:mono1}
\braket{\nabla\obj(\state)}{\state - \sol}
	\geq \braket{\nabla\obj(\sol)}{\state - \sol}
	\geq 0
	\quad
	\text{for all $\state\in\states$, $\sol\in\sols$}.
\end{equation}
By convexity, it further follows that $\braket{\nabla\obj(\state)}{\state - \sol} < 0$ whenever $\sol\in\sols$ and $\state\in\states\setminus\sols$, so equality holds in \eqref{eq:mono1} if and only if $\state \in \sols$.
This shows that convex programs automatically satisfy \eqref{eq:VC}.
\end{example}
\vspace{1ex}

\begin{example}[Quasi-convex problems]
\label{ex:quasi}
More generally, the above analysis also extends to \emph{quasi-convex} objectives, \ie when
\begin{equation}
\label{eq:quasi}
\tag{QC}
\obj(\alt\state)
	\leq \obj(\state)
	\implies
\braket{\nabla\obj(\state)}{\alt\state-\state}
	\leq 0
\end{equation}
for all $\state,\alt\state\in\states$ \cite{BV04}.
In this case, we have:
\end{example}

\begin{proposition}
\label{prop:quasi}
Suppose that $\obj$ is quasi-convex and non-degenerate, \ie
\begin{equation}
\label{eq:non-deg}
\braket{\nabla\obj(\state)}{z}
	\neq 0
	\quad
	\text{for all nonzero $z\in\tcone(\state)$, $\state\in\states\setminus\sols$}.
\end{equation}
Then, $\obj$ is variationally coherent.
\end{proposition}

\begin{remark*}
The non-degeneracy condition \eqref{eq:non-deg} is \emph{generic} in that it is satisfied by every quasi-convex function after an arbitrarily small perturbation leaving its minimum set unchanged.
In particular, it is automatically satisfied if $\obj$ is convex or pseudo-convex.
\end{remark*}

\begin{proof}
Take some $\sol\in\sols$ and $\state\in\states$.
Then, letting $\alt\state=\sol$ in \eqref{eq:quasi}, we readily obtain $\braket{\nabla\obj(\state)}{\state - \sol} \geq 0$ for all $\state\in\states$, $\sol\in\sols$.
Furthermore, if $\state\notin\sols$ but $\braket{\nabla\obj(\state)}{\state - \sol} = 0$, the gradient non-degeneracy condition \eqref{eq:non-deg} would be violated, implying in turn that, for any $\sol\in\sols$, we have $\braket{\nabla\obj(\state)}{\state - \sol} = 0$ only if $\state\in\sols$.
This shows that $\obj$ satisfies \eqref{eq:VC}.
\end{proof}
\vspace{1ex}

\begin{example}[Beyond quasi-convexity]
\label{ex:non-convex}
A simple example of a function that is variationally coherent without even being quasi-convex is
\begin{equation}
\obj(\state)
	= 2 \sum_{i=1}^{\vdim} \sqrt{1 + \state_{i}},
	\quad
	\state\in[0,1]^{\vdim}.
\end{equation}
When $\vdim\geq2$, it is easy to see $\obj$ is not quasi-convex:
for instance, taking $\vdim = 2$, $\state = (0,1)$ and $\alt\state = (1, 0)$ yields $\obj(\state/2 + \alt\state/2) = 2 \sqrt{6} > 2 \sqrt{2} = \max\{\obj(\state),\obj(\alt\state)\}$, so $\obj$ is not quasi-convex.
On the other hand, to estabilish \eqref{eq:VC}, simply note that $\sols = \{0\}$ and $\braket{\nabla\obj(\state)}{\state - 0} = \sum_{i=1}^{\vdim} \state_{i}/\sqrt{1 + \state_{i}} > 0$ for all $\state \in [0,1]^{\vdim}\exclude{0}$. 
\end{example}
\vspace{1ex}

\begin{example}[A weaker version of coherence]
\label{ex:star}
Consider the function
\begin{equation}
\obj(\state)
	= \frac{1}{2} \prod_{i=1}^{\vdim} \state_{i}^{2},
	\quad
	\state\in[-1,1]^{\vdim}.
\end{equation}
By inspection, it is easy to see that the minimum set of $\obj$ is $\sols = \setdef{\sol\in[-1,1]^{\vdim}}{\sol_{i} = 0\;\text{for some $i=1,\dotsc,\vdim$}}$.%
\footnote{Linear combinations of functions of this type play an important role in training deep learning models \textendash\ and, in particular \acp{GAN} \cite{GPM+14}.}
Since $\sols$ is not convex for $\vdim\geq2$, $\obj$ is not quasi-convex.
On the other hand, we have $\nabla\obj(\state) =2 \obj(\state) \cdot (1/\state_{1},\dotsc,1/\state_{\vdim})$, so $\braket{\nabla\obj(\state)}{\state - 0} \geq 0$ for all $\state\in[-1,1]^{\vdim}$ with equality only if $\state\in\sols$.
Moreover, for any $\sol\in\sols$ and all $\state\in\states$ sufficiently close to $\sol$, we have
\begin{equation}
\braket{\nabla\obj(\state)}{\state - \sol}
	= 2\obj(\state) \sum_{i=1}^{\vdim} \bracks*{1 - \frac{\sol_{i}}{\state_{i}}}
	= 2\obj(\state) \bracks*{\vdim - \sum_{i:\sol_{i}\neq0} \frac{\sol_{i}}{\state_{i}}}
	\geq 0.
\end{equation}
We thus conclude that $\obj$ satisfies the following weaker version of \eqref{eq:VC}:
\end{example}

\begin{definition}
\label{def:VC-weak}
We say that $\obj\from\states\to\R$ is \emph{weakly coherent} if the following conditions are satisfied:
\begin{enumerate}
[\indent\itshape a\upshape)]
\item
There exists some $\base\in\sols$ such that $\braket{\nabla\obj(\state)}{\state - \base} \geq 0$ with equality only if $\state\in\sols$.
\item
For all $\sol\in\sols$, $\braket{\nabla\obj(\state)}{\state - \sol} \geq 0$ whenever $\state$ is close enough to $\sol$.
\end{enumerate}
\end{definition}
Our analysis also applies to problems satisfying these less stringent requirements, in which case the minimum set $\sols = \argmin\obj$ of $\obj$ need not even be convex.%
\footnote{Obviously, \cref{def:VC,def:VC-weak} coincide if $\sols$ is a singleton.
This highlights the intricacies that arise in problems that do not admit a unique solution.}
For simplicity, we will first work with \cref{def:VC} and relegate these considerations to \cref{sec:local}.



\subsection{Stochastic mirror descent}
\label{subsec:MD}

To solve \eqref{eq:opt}, we will focus on the family of algorithms known as \acdef{SMD}, a class of first-order methods pioneered by Nemirovski and Yudin \cite{NY83} and studied further by Beck and Teboulle \cite{BecTeb03}, Nesterov \cite{Nes09}, Lan et al. \cite{LNS12}, and many others.
Referring to \cite{SS11,Bub15} for an overview, the specific variant of \ac{SMD} that we consider here is usually referred to as \acdef{DA} \cite{Nes09,Xia10,MZ18} or ``\emph{lazy}'' \acl{MD} \cite{SS11}.

The main idea of the method is as follows:
At each iteration, the algorithm takes as input an \ac{iid} sample of the gradient of $\sobj$ at the algorithm's current state.
Subsequently, the method takes a step along this stochastic gradient in the dual space $\dstates\equiv\dspace$ of $\vecspace$ (where gradients live), the result is ``mirrored'' back to the problem's feasible region $\states$, and the process repeats.
Formally, this gives rise to the recursion
\begin{equation}
\label{eq:SMD}
\tag{SMD}
\begin{aligned}
\act_{\run}
	&= \mirror(\score_{\run})
	\\
\score_{\run+1}
	&= \score_{\run} - \step_{\run} \nabla\sobj(\act_{\run};\sample_{\run})
\end{aligned}
\end{equation}
where:
\begin{enumerate}
\item
$\run=\running$ denotes the algorithm's running counter.
\item
$\score_{\run}\in\dstates$ is a score variable that aggregates gradient steps up to stage $\run$.
\item
$\mirror\from\dstates\to\states$ is the \emph{mirror map} that outputs a solution candidate $\act_{\run}\in\states$ as a function of the score variable $\score_{\run}\in\dspace$.
\item
$\sample_{\run}\in\samples$ is a sequence of \ac{iid} samples.%
\footnote{The indexing convention for $\sample_{\run}$ means that $\score_{\run}$ and $\act_{\run}$ are \emph{predictable} relative to the natural filtration $\filter_{\run} = \sigma(\sample_{1},\dotsc,\sample_{\run})$ of $\sample_{\run}$, \ie $\score_{\run+1}$ and $\act_{\run+1}$ are both $\filter_{\run}$-measurable.
To this history, we also attach the trivial $\sigma$-algebra as $\filter_{0}$ for completeness.}
\item
$\step_{\run}>0$ is the algorithm's step-size sequence, assumed in what follows to satisfy the Robbins\textendash Monro summability condition
\begin{equation}
\label{eq:L2L1}
\sum_{\run=1}^{\infty} \step_{\run}^{2}
	<\infty
	\quad
	\text{and}
	\quad
	\sum_{\run=1}^{\infty} \step_{\run}
	= \infty.
\end{equation}
\end{enumerate}
For a schematic illustration and a pseudocode implementation of \eqref{eq:SMD}, see \cref{fig:SMD} and \cref{alg:SMD} respectively.
\medskip


\begin{algorithm}[tbp]
\caption{Stochastic mirror descent (\ac{SMD})}
\label{alg:SMD}

\tt
\begin{algorithmic}[1]
\Require
	mirror map $\mirror\from\dstates\to\states$;
	step-size sequence $\step_{\run}>0$
\State
	choose $\score\in\dstates \equiv \dspace$
	\Comment{initialization}
\For{$\run=\running$}
	\State
		set $\act \leftarrow \mirror(\score)$
		\Comment{set state}
	\State
		draw $\sample\in\samples$
		\Comment{gradient sample}
	\State
		get $\est\payv = -\nabla\sobj(\act;\sample)$
		\Comment{get oracle feedback}
	\State
		set $\score \leftarrow \score + \step_{\run} \est\payv$
		\Comment{update score variable}
\EndFor
\\
\Return $\act$
	\Comment{output}
\end{algorithmic}
\end{algorithm}



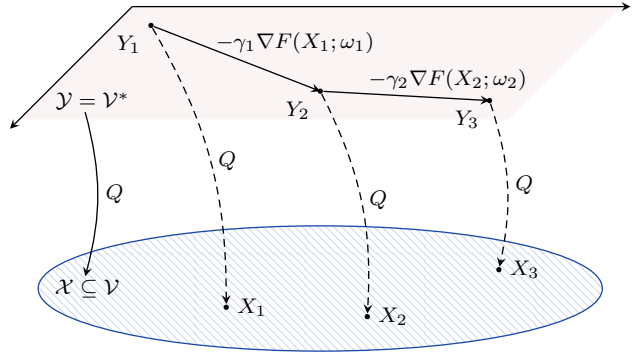
\begin{figure}[tbp]
\centering
\footnotesize

\colorlet{TangentColor}{DodgerBlue!40!MidnightBlue}
\colorlet{PolarColor}{FireBrick}

\begin{tikzpicture}
[auto,
>=stealth,
vecstyle/.style = {->, line width=.5pt},
edgestyle/.style={-, line width=.5pt},
nodestyle/.style = {circle,fill=black,inner sep=0, minimum size=2},
plotstyle/.style={color=DarkGreen!80!Cyan,thick}]

\def\radius{2.5}
\def\costhirty{0.8660256}
\def\cosfortyfive{0.7071068}
\def\diff{0.77886966103678}
\def\veclength{.5}
\def\conescale{.8}

\coordinate (base) at (0,0);
\coordinate (dbase) at (0,1.5*\radius);

\coordinate (X) at ($(base) + (-.25*\radius,-0*\radius)$);
\coordinate (Y) at ($(dbase) + (-.25*\radius,-.5*\radius)$);

\coordinate (y1) at ($(dbase) + (0.1*\radius,-.1*\radius)$);
\coordinate (y2) at ($(dbase) + (1*\radius,-.45*\radius)$);
\coordinate (y3) at ($(dbase) + (1.9*\radius,-.5*\radius)$);
\coordinate (y4) at ($(dbase) + (2.25*\radius,-.1*\radius)$);

\coordinate (x1) at ($(base) + (.5*\radius,-.1*\radius)$);
\coordinate (x2) at ($(base) + (1.25*\radius,-.15*\radius)$);
\coordinate (x3) at ($(base) + (1.95*\radius,.1*\radius)$);
\coordinate (x4) at ($(base) + (2.25*\radius,0.1*\radius)$);

\filldraw
[pattern = north west lines, pattern color = TangentColor!20, draw=TangentColor, edgestyle]
($(base) + (\radius,0)$) ellipse [x radius= 1.5*\radius, y radius= \radius/3];

\fill
[PolarColor!5]
(dbase) -- ($(dbase) - (.6*\radius,.6*\radius)$) -- ($(dbase) + (2*\radius,-.6*\radius)$)-- ($(dbase) + (2.6*\radius,0)$);
\draw
[vecstyle]
(dbase) -- ($(dbase) + (2.65*\radius,0)$);
\draw
[vecstyle]
(dbase) -- ($(dbase) - .65*(\radius,\radius)$);

\node [inner sep = 1pt] (X) at (X) {$\feas \subseteq \vecspace\!$};
\node [inner sep = 1pt] (Y) at (Y) {$\dstates = \dspace\!\!\!\!$};
\draw [vecstyle] (Y.south) to [bend left =15] node {$\mirror$}(X.north);

\node [nodestyle, label = below left:$\score_{1}$] (y1) at (y1) {};
\node [nodestyle, label = below left:$\score_{2}$] (y2) at (y2) {};
\node [nodestyle, label = below left:$\score_{3}$] (y3) at (y3) {};

\draw [vecstyle] (y1) -- (y2.center) node [right,near start,black] {\;\;$-\step_{1} \nabla\sobj(\act_{1};\sample_{1})$};
\draw [vecstyle] (y2) -- (y3.center) node [above,near end,black] {$-\step_{2} \nabla\sobj(\act_{2};\sample_{2})$};

\node [nodestyle, label = right:$\act_{1}$] (x1) at (x1) {};
\node [nodestyle, label = right:$\act_{2}$] (x2) at (x2) {};
\node [nodestyle, label = right:$\act_{3}$] (x3) at (x3) {};

\draw [vecstyle, densely dashed] (y1) to [bend left = 15] node [right] {$\mirror$} (x1);
\draw [vecstyle, densely dashed] (y2) to [bend left = 15] node [right] {$\mirror$} (x2);
\draw [vecstyle, densely dashed] (y3) to [bend left = 15] node [right] {$\mirror$} (x3);


\end{tikzpicture}
\caption{Schematic representation of \ac{SMD} (\cref{alg:SMD}).}
\label{fig:SMD}
\end{figure}


In more detail, the algorithm's mirror map $\mirror\from\dstates\to\states$ is defined as
\begin{equation}
\label{eq:mirror}
\mirror(\dstate)
	= \argmax_{\state\in\states} \{\braket{\dstate}{\state} - \hreg(\state)\},
\end{equation}
where the \emph{regularizer} (or \emph{penalty function}) $\hreg\from\states\to\R$ is assumed to be continuous and strongly convex on $\states$, \ie there exists some $\strong>0$ such that
\begin{equation}
\label{eq:strong}
\hreg(\tau\state + (1-\tau)\alt\state)
	\leq \tau \hreg(\state) + (1-\tau) \hreg(\alt\state) - \tfrac{1}{2} \strong\tau(1-\tau) \norm{\alt\state - \state}^{2},
\end{equation}
for all $\state,\alt\state\in\states$ and all $\tau\in[0,1]$.
The mapping $\mirror\from\dspace\to\states$ defined by \eqref{eq:mirror} is then called the \emph{mirror map induced by $\hreg$}.
For concreteness, we present below some well-known examples of regularizers and mirror maps:
\vspace{1ex}

\begin{example}[Euclidean regularization]
\label{ex:Eucl}
Let $\hreg(\state) = \frac{1}{2} \norm{\state}_{2}^{2}$.
Then, $\hreg$ is $1$-strongly convex with respect to the Euclidean norm $\norm{\cdot}_{2}$, and the induced mirror map is the closest point projection
\begin{equation}
\label{eq:mirror-Eucl}
\Eucl(\dstate)
	= \argmax_{\state\in\states} \braces[\big]{ \braket{\dstate}{\state} - \tfrac{1}{2} \norm{\state}_{2}^{2} }
	= \argmin_{\state\in\states}\; \norm{\dstate - \state}_{2}^{2}.
\end{equation}
The resulting descent algorithm is known in the literature as (lazy) \acdef{SGD} and we study it in detail in \cref{sec:sharp}.
For future reference, we also note that $\hreg$ is differentiable throughout $\states$ and $\Eucl$ is \emph{surjective} (\ie $\im\Eucl = \states$).
\end{example}
\vspace{1ex}

\begin{example}[Entropic regularization]
\label{ex:logit}
Let $\simplex = \setdef{\state\in\R^{\vdim}_{+}}{\sum_{i=1}^{\vdim} \state_{i} = 1}$ denote the unit simplex of $\R^{\vdim}$.
A widely used regularizer in this setting is the (negative) Gibbs entropy $\hreg(\state) = \sum_{i=1}^{\vdim} \state_{i} \log \state_{i}$:
this regularizer is $1$-strongly convex with respect to the $L^{1}$-norm and a straightforward calculation shows that the induced mirror map is
\begin{equation}
\label{eq:mirror-logit}
\logit(\dstate)
	= \frac{1}{\sum_{i=1}^{\vdim} \exp(\dstate_{i})} (\exp(\dstate_{1}),\dotsc,\exp(\dstate_{\vdim})).
\end{equation}
This example is known as \emph{entropic regularization} and the resulting \acl{MD} algorithm has been studied extensively in the context of linear programming, online learning and game theory \cite{SS11,AHK12}.
For posterity, we also note that $\hreg$ is differentiable \emph{only} on the relative interior $\intsimplex$ of $\simplex$ and $\im\logit = \intsimplex$ (\ie $\logit$ is ``essentially'' surjective).
\end{example}


\subsection{Overview of main results}
\label{subsec:main}

To motivate the analysis to follow, we provide below a brief overview of our main results:

\begin{itemize}

\item
\emph{Global convergence:}
If \eqref{eq:opt} is variationally coherent, the last iterate $\act_{\run}$ of \eqref{eq:SMD} converges to a global minimizer of $\obj$ with probability $1$.

\item
\emph{Local convergence:}
If $\sol$ is a \emph{locally coherent} minimum point of $\obj$ (a notion introduced in \cref{sec:local}), the last iterate $\act_{\run}$ of \ac{SMD} converges locally to $\sol$ with high probability.

\item
\emph{Sharp minima:}
If $\mirror$ is surjective and $\sol$ is a \emph{sharp} minimum of $\obj$ (a fundamental notion due to Polyak which we discuss in \cref{sec:sharp}), $\act_{\run}$ reaches $\sol$ in a \emph{finite} number of iterations \as.
\end{itemize}

\section{Main tools and first results}
\label{sec:recurrence}

As a stepping stone to analyze the long-term behavior of \eqref{eq:SMD}, we derive in this section a recurrence result which is interesting in its own right.
Specifically, we show that, if \eqref{eq:opt} is coherent, then, with probability $1$, $\act_{\run}$ visits any neighborhood of $\sols$ infinitely often;
as a corollary, there exists a (random) subsequence $\act_{\run_{\runalt}}$ of $\act_{\run}$ that converges to $\argmin\obj$ \as.
In what follows, our goal will be to state this result formally and to introduce the analytic machinery used for its proof (as well as the analysis of the subsequent sections).

\subsection{The Fenchel coupling}
\label{subsec:Fenchel}

The first key ingredient of our analysis will be the \emph{Fenchel coupling}, a primal-dual variant of the Bregman divergence \cite{Bre67} that plays the role of an energy function for \eqref{eq:SMD}:

\begin{definition}
\label{def:Fenchel}
Let $h\from\states \to \R$ be a regularizer on $\states$.
The induced \emph{Fenchel coupling} $\fench(\base,\dstate)$ between a base-point $\base\in\states$ and a dual vector $\dstate\in\dstates$ is defined as
\begin{equation}
\label{eq:Fenchel}
\fench(\base,\dstate)
	= \hreg(\base) + \hreg^{\ast}(\dstate) - \braket{\dstate}{\base}
\end{equation}
where $\hreg^{\ast}(\dstate) = \max_{\state\in\states} \{ \braket{\dstate}{\state} - \hreg(\state) \}$ denotes the convex conjugate of $h$.
\end{definition}

By Fenchel's inequality (the namesake of the Fenchel coupling), we have $\hreg(\base) + \hreg^{\ast}(\dstate) - \braket{\dstate}{\base} \geq 0$ with equality if and only if $\base = \mirror(\dstate)$.
As such, $\fench(\base,\dstate)$ can be seen as a (typically asymmetric) ``distance measure'' between $\base\in\states$ and $\dstate\in\dstates$.
The following lemma quantifies some basic properties of this coupling:

\begin{lemma}
\label{lem:Fenchel}
Let $h$ be a $\strong$-strongly convex regularizer on $\states$.
Then, for all $\base\in\states$ and all $\dstate,\alt\dstate\in\dstates$, we have:
\begin{subequations}
\label{eq:Fench-properties}
\begin{alignat}{2}
\label{eq:Fench-norm}
&a)
	\;\;
	\fench(\base,\dstate)
	&&\geq \frac{\strong}{2} \, \norm{\mirror(\dstate) - \base}^{2}.
	\hspace{17em}
	\\[3pt]
\label{eq:Fench-bound}
&b)
	\;\;
	\fench(\base,\alt\dstate)
	&&\leq \fench(\base,\dstate) + \braket{\alt\dstate - \dstate}{\mirror(\dstate) - \base} + \frac{1}{2\strong} \dnorm{\alt\dstate - \dstate}^{2}.
\end{alignat}
\end{subequations}
\end{lemma}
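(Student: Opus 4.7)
The plan is to re-express the Fenchel coupling in a form that directly exposes either the strong convexity of $h$ (for part (a)) or the smoothness of its conjugate $h^{\ast}$ (for part (b)), and then invoke the two standard conjugacy facts: $\strong$-strong convexity of $h$ on $\feas$ is equivalent to $\nabla h^{\ast}$ being $(1/\strong)$-Lipschitz on $\dspace$ (so $h^{\ast}$ is $(1/\strong)$-smooth), and in both cases $\mirror(y)$ is the unique maximizer (and hence the gradient of $h^{\ast}$ at $y$).

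For part (a), I would first rewrite the Fenchel coupling using the definition $h^{\ast}(y)=\braket{y}{\mirror(y)}-h(\mirror(y))$:
\begin{equation*}
\fench(\base,y)
= h(\base) - h(\mirror(y)) - \braket{y}{\base - \mirror(y)}.
\end{equation*}
Now extend $h$ by $+\infty$ outside $\feas$ to get a $\strong$-strongly convex function $\tilde h$ on $\vecspace$; the first-order optimality condition for $\mirror(y)=\argmax_{x\in\feas}\{\braket{y}{x}-h(x)\}$ gives $y\in\subd \tilde h(\mirror(y))$. Applying the strong-convexity subgradient inequality for $\tilde h$ between $\mirror(y)$ and $\base\in\feas$ then yields
\begin{equation*}
h(\base) - h(\mirror(y)) - \braket{y}{\base-\mirror(y)}
\geq \tfrac{1}{2}\strong\,\norm{\base-\mirror(y)}^{2},
\end{equation*}
which is exactly \eqref{eq:Fench-norm}.

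For part (b), I would exploit the standard fact that strong convexity of $\tilde h$ implies $h^{\ast}=\tilde h^{\ast}$ is differentiable on all of $\dspace$ with $\nabla h^{\ast}(y)=\mirror(y)$, and that $\nabla h^{\ast}$ is $(1/\strong)$-Lipschitz with respect to the dual norm. Consequently, the descent-lemma bound applies:
\begin{equation*}
h^{\ast}(y')
\leq h^{\ast}(y) + \braket{y'-y}{\mirror(y)} + \tfrac{1}{2\strong}\dnorm{y'-y}^{2}.
\end{equation*}
Substituting this into $\fench(\base,y')=h(\base)+h^{\ast}(y')-\braket{y'}{\base}$ and regrouping the linear terms in $y'-y$ via $\braket{y'-y}{\mirror(y)}-\braket{y'-y}{\base}=\braket{y'-y}{\mirror(y)-\base}$ produces \eqref{eq:Fench-bound}.

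The only mildly delicate point is the justification that $h^{\ast}$ really is smooth with Lipschitz gradient equal to $\mirror$; this is where the constrained nature of $\feas$ and the (possibly) non-differentiability of $h$ on $\bd\feas$ must be handled cleanly via the extension $\tilde h$, so that the conjugate duality between strong convexity and Lipschitz smoothness (a standard result, e.g.\ \cite{RW98}) can be applied on the whole of $\vecspace$. Once this is in place both inequalities follow by direct algebra as above, and no additional assumption beyond strong convexity of $h$ is needed.
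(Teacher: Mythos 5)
Your proposal is correct and follows essentially the same route as the paper's proof: part (a) via the rewriting $\fench(\base,y)=h(\base)-h(\mirror(y))-\braket{y}{\base-\mirror(y)}$ together with $y\in\pd h(\mirror(y))$ and the strong-convexity lower bound (which the paper derives inline by a convex-combination argument with $\tau\to0^{+}$ rather than quoting the subgradient inequality directly), and part (b) via the extension of $h$ by $+\infty$, the conjugate duality between strong convexity and $(1/\strong)$-smoothness from \cite{RW98}, and the identity $\nabla h^{\ast}(y)=\mirror(y)$. No gaps.
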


\cref{lem:Fenchel} (which we prove in \cref{app:proofs}) shows that $\mirror(\dstate_{\run})\to\base$ whenever $\fench(\base,\dstate_{\run})\to0$, so the Fenchel coupling can be used to test the convergence of the primal sequence $\state_{\run} = \mirror(\dstate_{\run})$ to a given base point $\base\in\states$.
For technical reasons, it will be convenient to also make the converse assumption, namely:

\begin{assumption}
\label{asm:reciprocity}
$\fench(\base,\dstate_{\run}) \to 0$ whenever $\mirror(\dstate_{\run})\to\base$.
\end{assumption}

\cref{asm:reciprocity} can be seen as a ``reciprocity condition'':
essentially, it means that the sublevel sets of $\fench(\base,\cdot)$ are mapped under $\mirror$ to neighborhoods of $\base$ in $\states$ (cf.~\cref{app:proofs}).
In this way, \cref{asm:reciprocity} can be seen as a primal-dual analogue of the reciprocity conditions for the Bregman divergence that are widely used in the literature on proximal and forward-backward methods \cite{CT93,Kiw97b}.
Most common regularizers satisfy this technical requirement
(including the Euclidean and entropic regularizers of \cref{ex:Eucl,ex:logit} respectively).

\subsection{Main recurrence result}
\label{subsec:recurrence}

To state our recurrence result, we require one last piece of notation pertaining to measuring distances in $\states$:

\begin{definition}
\label{def:metrics}
Let $\region$ be a subset of $\states$.
\begin{subequations}
\begin{enumerate}
\item
The \emph{distance} between $\region$ and $\state\in\states$ is defined as $\dist(\region,\state) = \inf_{\alt\state\in\region} \norm{\alt\state - \state}$,
and the corresponding \emph{$\eps$-neighborhood of $\region$} is
\begin{alignat}{2}
\label{eq:ball-norm}
\ball(\region,\eps)
	&= \setdef{\state\in\states}{\dist(\region,\state) < \eps}.
\intertext{%
\item
The (setwise) \emph{Fenchel coupling} between $\region$ and $\dstate\in\dstates$ is defined as $\fench(\region,\dstate) = \inf_{\state\in\region} \fench(\state,\dstate)$,
and the corresponding \emph{Fenchel $\delta$-zone} of $\region$ under $h$ is}
\label{eq:ball-Fench}
\fball(\region,\delta)
	&= \setdef{\state\in\states}{\text{$\state=\mirror(\dstate)$ for some $\dstate\in\dstates$ with $\fench(\region,\dstate) < \delta$}}.
\end{alignat}
\end{enumerate}
\end{subequations}
\end{definition} 

We then have the following recurrence result for variationally coherent problems:

\begin{proposition}
\label{prop:recurrence}
Fix some $\eps>0$ and $\delta>0$.
If \eqref{eq:opt} is variationally coherent and \cref{asm:diff,asm:L2,asm:reciprocity} hold, the \textpar{random} iterates $\act_{\run}$ of \cref{alg:SMD} enter $\ball(\sols,\eps)$ and $\fball(\sols,\delta)$ infinitely many times \as.
\end{proposition}

\begin{corollary}
\label{cor:recurrence}
With probability $1$, there exists a subsequence $\act_{\run_{\runalt}}$ of $\act_{\run}$ converging to a \textpar{random} minimum point $\sol$ of \eqref{eq:opt}.
\end{corollary}

\smallskip

The proof of \cref{prop:recurrence} consists of three main steps which we outline below:

\setcounter{proofstep}{0}
\begin{proofstep}{Martingale properties of $\score_{\run}$}
First, let
\begin{equation}
\payv(\state)
	= - \exof{\nabla\sobj(\state;\sample)} = -\nabla\obj(\state)
\end{equation}
denote the negative gradient of $\obj$ at $\state\in\states$, and write
\begin{equation}
\label{eq:oracle}
\est\payv_{\run}
	= -\nabla\sobj(\act_{\run};\sample_{\run})
\end{equation}
for the corresponding oracle feedback at stage $\run$.
Then, \cref{alg:SMD} may be written in Robbins\textendash Monro form as
\begin{equation}
\label{eq:SA}
\score_{\run+1}
	= \score_{\run} + \step_{\run} \est\payv_{\run}
	= \score_{\run} + \step_{\run} \bracks{\payv(\act_{\run}) + \noise_{\run}},
\end{equation}
where
\begin{equation}
\noise_{\run}
	= \nabla\obj(\act_{\run}) - \nabla\sobj(\act_{\run};\sample_{\run})
\end{equation}
denotes the difference between the mean gradient of $\obj$ at $\act_{\run}$ and the $\run$-th stage gradient sample.%
\footnote{Recall here that there is a one-step offset between $\act_{\run}$ and $\sample_{\run+1}$ at the $\run$-th iteration of \ac{SMD}.}
\begin{subequations}
\label{eq:MDS}
By construction, $\noise_{\run}$ is a \acl{MDS} relative to the history (natural filtration) $\filter_{\run} = \sigma(\sample_{1},\dotsc,\sample_{\run})$ of $\sample_{\run}$, \ie
\begin{equation}
\label{eq:zero-mean}
\exof{\noise_{\run}\given\filter_{\run-1}}
	= 0
	\quad
	\text{for all $\run$.}
\end{equation}
Furthermore, by \cref{asm:L2}, it readily follows that $\noise_{\run}$ has uniformly bounded second moments, \ie there exists some finite $\noisedev\geq0$ such that
\begin{equation}
\label{eq:MSE}
\exof{\dnorm{\noise_{\run}}^{2} \given \filter_{\run-1}}
	\leq \noisevar
	\quad
	\text{for all $\run$},
\end{equation}
implying in turn that $\noise_{\run}$ is bounded in $L^{2}$ (for a more detailed treatment, see \cref{app:proofs}).
\end{subequations}
\end{proofstep}

\begin{proofstep}{Recurrence of $\eps$-neighborhoods}
Invoking the \acl{LLN} for $L^{2}$-bounded \aclp{MDS} and using the Fenchel coupling as an energy function (cf.~\cref{app:proofs}), we show that if $\act_{\run}$ remains outside $\ball(\sols,\eps)$ for sufficiently large $\run$, we must also have $\fench(\sols,\score_{\run})\to-\infty$ \as.
This contradicts the positive-definiteness of $\fench$, so $\act_{\run}$ must enter $\ball(\sols,\eps)$ infinitely often \as.
\end{proofstep}

\begin{proofstep}{Recurrence of Fenchel zones}
By reciprocity (\cref{asm:reciprocity}), $\fball(\sols, \delta)$ always contains an $\eps$-neighborhood of $\sols$.
Since $\act_{\run}$ enters $\ball(\sols,\eps)$ infinitely many times \as, the same must hold for $\fball(\sols, \delta)$.
Our claim and \cref{cor:recurrence} then follow immediately.
\end{proofstep}

\section{Global convergence under coherence}
\label{sec:global}

The convergence of a subsequence of $\act_{\run}$ to the minimum set of \eqref{eq:opt} is one of the crucial steps in establishing our first main result:

\begin{theorem}[Almost sure global convergence]
\label{thm:global}
Suppose that \eqref{eq:opt} is variationally coherent.
Then, under \cref{asm:diff,asm:L2,asm:reciprocity}, $\act_{\run}$ converges with probability $1$ to a \textpar{possibly random} minimum point of \eqref{eq:opt}.
\end{theorem}

\begin{corollary}
\label{cor:global}
If $\obj$ is a non-degenerate quasi-convex \textpar{or pseudo-convex, or convex} function and \cref{asm:diff,asm:L2,asm:reciprocity} hold, the last iterate of \eqref{eq:SMD} converges with probability $1$ to a \textpar{possibly random} minimum point of \eqref{eq:opt}.
\end{corollary}

Before discussing the proof of \cref{thm:global}, it is important to note that most of the literature surrounding \eqref{eq:SMD} and its variants (see \eg \cite{NJLS09,Nes09,Xia10,DAJJ12} and references therein) focuses on the so-called \emph{ergodic average} of $\act_{\run}$, \ie
\begin{equation}
\label{eq:ergodic}
\bar\act_{\run}
	= \frac{\sum_{\runalt=\start}^{\run} \step_{\runalt}\act_{\runalt}}{\sum_{\runalt=\start}^{\run} \step_{\runalt}}
\end{equation}
Despite the appealing ``self-averaging'' properties of $\bar\act_{\run}$ in convex problems \cite{NJLS09,Nes09}, it is not clear how to extend the standard tools used to establish convergence of $\bar\act_{\run}$ beyond convex/monotone problems (even to pseudo-convex programs).
Since convergence of $\act_{\run}$ automatically implies that of $\bar\act_{\run}$, \cref{thm:global} simultaneously establishes the convergence of the last iterate of \ac{SMD} and extends existing ergodic convergence results to a wider class of non-convex stochastic programs.

\cref{cor:global} also extends the corresponding results of \cite{NL14} for the convergence of the last iterate of \eqref{eq:SMD} when $\obj$ is (strongly) convex and $\hreg$ has Lipschitz-continuous gradients (so the induced Bregman divergence can be bounded from above by a quadratic surrogate of the primal norm).
Our proof strategy is similar and relies on the following lemma, often attributed to Gladyshev \cite[p.~49]{Pol87}:%
\footnote{We thank an anonymous reviewer for suggesting this approach.
Our original proof strategy relied on the so-called ``\acs{ODE} method'' of stochastic approximation \cite{Ben99} and was considerably more intricate.}

\begin{lemma}[Gladyshev]
\label{lem:Gladyshev}
Let $a_{\run}$, $\run=\running$, be a sequence of nonnegative random variables such that
\begin{equation}
\label{eq:Fejer}
\exof{a_{\run+1} \given a_{\start},\dotsc,a_{\run}}
	\leq (1 + \delta_{\run}) a_{\run} + \eps_{\run},
\end{equation}
where $\delta_{\run}$ and $\eps_{\run}$ are nonnegative deterministic sequences with
\begin{equation}
\sum_{\run=\start}^{\infty} \delta_{\run} < \infty
	\quad
	\text{and}
	\quad
\sum_{\run=\start}^{\infty} \eps_{\run} < \infty.
\end{equation}
Then, $a_{\run}$ converges \as to some random variable $a_{\infty}\geq0$.
\end{lemma}

As shown below, this ``quasi-Fejérian'' monotonicity property plays a critical part in establishing the convergence of \eqref{eq:SMD}:

\begin{proof}[Proof of \cref{thm:global}]
Let $\sol\in\sols$ be a minimum point of \eqref{eq:opt}.
Then, letting $\fench_{\run} = \fench(\sol,\score_{\run})$, \cref{lem:Fenchel} gives
\begin{flalign}
\label{eq:Fench-bound1}
\fench_{\run+1}
	= \fench(\sol,\score_{\run+1})
	&= \fench(\sol,\score_{\run} + \step_{\run}\est\payv_{\run})
	\notag\\
	&\leq \fench(\sol,\score_{\run})
	+ \step_{\run} \braket{\est\payv_{\run}}{\act_{\run} - \sol}
	+ \frac{\step_{\run}^{2}}{2\strong} \dnorm{\est\payv_{\run}}^{2}
	\notag\\
	&= \fench_{\run}
	+ \step_{\run} \braket{\payv(\act_{\run})}{\act_{\run} - \sol}
	+ \step_{\run} \snoise_{\run}
	+ \frac{\step_{\run}^{2}}{2\strong} \dnorm{\est\payv_{\run}}^{2}
	\notag\\
	&\leq \fench_{\run}
	+ \step_{\run} \snoise_{\run}
	+ \frac{\step_{\run}^{2}}{2\strong} \dnorm{\est\payv_{\run}}^{2},
\end{flalign}
where we set $\snoise_{\run} = \braket{\noise_{\run}}{\act_{\run} - \sol}$ in the third line and used the fact that $\obj$ satisfies \eqref{eq:VC} in the last one.
Since $\score_{\run}$ is predictable relative to $\filter_{\run}$ (\ie $\score_{\run}$ is $\filter_{\run-1}$-measurable), the process $\fench_{\run} = \fench(\sol,\score_{\run})$ is itself adapted to the shifted filtration $\alt\filter_{\run} = \sigma(\sample_{\start},\score_{2}\dotsc,\sample_{\run-1},\score_{\run}) = \filter_{\run-1}$.
Thus, taking conditional expectations and invoking \cref{asm:L2}, the bound \eqref{eq:Fench-bound1} becomes:
\begin{flalign}
\label{eq:Fench-bound-cond}
\exof{\fench_{\run+1} \given \alt\filter_{\run}}
	&\leq \fench_{\run}
	+ \exof{\snoise_{\run} \given \alt\filter_{\run}}
	+ \frac{\step_{\run}^{2}}{2\strong} \exof{\dnorm{\est\payv_{\run}}^{2} \given \alt\filter_{\run}}
	\notag\\
	&= \fench_{\run}
	+ \exof{\snoise_{\run} \given \filter_{\run-1}}
	+ \frac{\step_{\run}^{2}}{2\strong} \exof{\dnorm{\nabla\sobj(\act_{\run};\sample_{\run})}^{2} \given \filter_{\run-1}}
	\notag\\
	&\leq \fench_{\run}
	+ \frac{\step_{\run}^{2} \vbound^{2}}{2\strong},
\end{flalign}
where, in the last line, we used \cref{asm:L2} and the fact that $\noise_{\run}$ is a \acl{MDS} (so $\exof{\snoise_{\run} \given \filter_{\run-1}} = 0$; for a detailed derivation, see the proof of \cref{prop:recurrence} in \cref{app:proofs}).
Hence, with $\sum_{\run=\start}^{\infty} \step_{\run}^{2} < \infty$, \cref{lem:Gladyshev} implies that $\fench_{\run}$ converges \as to some finite limit $\fench_{\infty}$.

Now, by \cref{prop:recurrence}, there exists \as a subsequence $\score_{\run_{\runalt}}$ of $\score_{\run}$ and some (possibly random) $\sol\in\sols$ such that $\lim_{\runalt\to\infty} \fench(\sol,\score_{\run_{\runalt}}) = 0$.
Since the limit $\lim_{\run\to\infty} \fench(\sol,\score_{\run})$ exists \as, it follows that $\lim_{\run\to\infty} \fench(\sol,\score_{\run}) = 0$.
This shows that, with probability $1$, $\act_{\run} = \mirror(\score_{\run})$ converges to some (random) minimum point $\sol$ of \eqref{eq:opt}, as claimed.
\end{proof}

In closing this section, we should note that the conclusion of \cref{thm:global} also applies to problems that are ``almost'' coherent in the sense of \cref{ex:star}, \ie
\begin{enumerate}
[\indent\itshape a\upshape)]
\item
There exists a minimizer $\base\in\sols$ such that $\braket{\nabla\obj(\state)}{\state - \base} \geq 0$ with equality only if $\state\in\sols$.
\item
For all $\sol\in\sols$, $\braket{\nabla\obj(\state)}{\state - \sol} \geq 0$ whenever $\state$ is close enough to $\sol$.
\end{enumerate}
Proving this more general result requires some of the machinery presented in the following section, so we relegate its discussion until all the requisite tools are in place.

\section{Convergence under local/weak coherence}
\label{sec:local}

In this section, our goal is to extend the convergence analysis of the previous section to account for optimization problems that are only ``locally'' coherent.
Building on \cref{def:VC}, these are defined as follows:

\begin{definition}
\label{def:VC-local}
Let $\region$ be a closed set of local minimizers of $\obj$, viz. $\obj(\state) \geq \obj(\sol)$ for all $\sol\in\region$ and all $\state$ sufficiently close to $\region$.
We say that $\region$ is \emph{locally coherent} if there exists an open neighborhood $U$ of $\region$ such that
\begin{equation}
\label{eq:LVC}
\tag{LVC}
\braket{\nabla\obj(\state)}{\state - \sol}
	\geq 0
	\quad
	\text{for all $\state\in U$, $\sol\in\region$},
\end{equation}
and there exists some $\sol\in\region$ such that equality holds in \eqref{eq:LVC} only if $\state\in\region$.
\end{definition} 

An immediate consequence of \cref{def:VC-local} is that locally coherent sets are isolated components of local minimizers of $\obj$.
To see this, if $\region$, $U$ and $\sol$ are as in \cref{def:VC-local} and $\state\in U$ is a local minimizer of $\obj$, we would have $\braket{\nabla\obj(\state)}{z} \geq 0$ for all tangent $z\in\tcone(\state)$.
Applying this to $z = \sol - \state$ gives $\braket{\nabla\obj(\state)}{\state - \sol} \geq 0$, so, by the definition of local coherence, we conclude that $\state\in\region$.

We also note that although the minimum set of a globally coherent problem is \emph{a fortiori} locally coherent, the converse need not hold.
A concrete example of a function which is not globally coherent but which admits a locally coherent minimum is the Rosenbrock test function
\begin{equation}
\label{eq:Rosenbrock}
\obj(\state)
	= \sum_{i=1}^{\vdim} \bracks{100(\state_{i+1} - \state_{i})^{2} + (1 - \state_{i}^{2})},
	\quad
	\state\in[-2,2]^{\vdim},
\end{equation}
which has seen extensive use in the literature as a non-convex convergence speed benchmark (cf.~\cref{sec:numerics}).%
\footnote{Local coherence can be proved by a straightforward algebraic calculation (omitted for concision).}
From this example, we see that the profile of $\obj$ around a locally coherent set could be highly non-convex, possibly including a wide variety of valleys, talwegs and ridges;
in fact, even \emph{quasi}-convexity may fail to hold locally.

Now, in contrast to globally coherent optimization problems, an ``unlucky'' gradient sample could drive \eqref{eq:SMD} out of the ``basin of attraction'' of a locally coherent set (the largest neighborhood $U$ for which \eqref{eq:LVC} holds), possibly never to return.
For this reason, instead of focusing on global convergence results with probability one, we will focus on local convergence with high probability.
Our main result along these lines is as follows:

\begin{theorem}[Local convergence with high probability]
\label{thm:local}
Let $\region$ be locally coherent for \eqref{eq:opt} and fix some confidence level $\delta>0$.
Then, under \cref{asm:diff,asm:L2,asm:reciprocity}, there exists an open neighborhood $\nhd$ of $\region$, independent of $\delta$, such that
\begin{equation}
\label{eq:conv-prob}
\probof{\textup{$\act_{\run}$ converges to $\region$} \given \act_{\start}\in \nhd}
	\geq 1-\delta,
\end{equation}
provided that the algorithm's step-size sequence $\step_{\run}$ is small enough.
\end{theorem}

%
%

\begin{remark*}
As a concrete application of \cref{thm:local}, fix any $\beta \in (1/2,1]$.
Then, for every confidence level $\delta>0$, \cref{thm:local} implies that there exists some small enough $\step>0$ such that if \cref{alg:SMD} is run with step-size $\step_{\run} = \step/\run^{\beta}$, \cref{eq:conv-prob} holds.
We emphasize the interesting point here:
the open neighborhood $\nhd$ is fixed once and for all, and does not depend on the probability threshold $\delta$.
That is, to get convergence with higher probability, it is \emph{not} necessary to assume that $\act_{\start}$ starts closer to $\region$:
one need only use a smaller step-size sequence satisfying \eqref{eq:L2L1}.
\end{remark*}
\vspace{1ex}

The key idea behind the proof of \cref{thm:local} is as follows:
First, it suffices to consider the case where $\region$ consists of a single local minimizer $\sol$;
the argument for the general follows the same techniques as in \cref{sec:global}.
Then, conditioning on the event that $\act_{\run}$ remains sufficiently close to $\sol$ for all $\run$, convergence can be obtained by invoking \cref{thm:global} and treating \eqref{eq:opt} as a variationally coherent problem over a smaller subset of $\feas$ over which \eqref{eq:LVC} holds.
Therefore, to prove \cref{thm:local}, it suffices to show that $\act_{\run}$ remains close to $\sol$ for all $\run$ with probability no less than $1-\delta$.
To achieve this, we rely again on the properties of the Fenchel coupling, and we decompose the stochastic errors affecting each iteration of the algorithm into a first-order $\bigoh(\step_{\run})$ martingale term and a second-order $\bigoh(\step_{\run}^{2})$ submartingale perturbation.
Using Doob's maximal inequality, we then show that the aggregation of both errors remains controllably small with probability at least $1-\delta$.
\vspace{1ex}

We formalize all this below:

\setcounter{proofstep}{0}
\begin{proofof}{Proof of \cref{thm:local}}
We break the proof into three steps.

\begin{proofstep}{Controlling the martingale error}
Fix some $\epsilon > 0$.
As in the proof of \cref{thm:global}, let $\noise_{\run} = \nabla\obj(\act_{\run}) - \nabla\sobj(\act_{\run};\sample_{\run})$ and set $\snoise_{\run} = \braket{\noise_{\run}}{\act_{\run}-\sol}$ where $\sol\in\region$ is such that \eqref{eq:LVC} holds as an equality only if $\state\in\region$ (cf.~\cref{def:VC-local}).
We show below that there exists a step-size sequence $(\step_{\run})_{\run=\start}^{\infty}$ such that
\begin{equation}
\label{eq:mg_noise_bound}
\probof*{\sup_{\run} \sum_{\runalt=\start}^{\run} \step_{\runalt} \snoise_{\runalt} \leq \eps} \geq 1 - \frac{\delta}{2}.
\end{equation}
To show this, we start by noting that, as in the proof of \cref{prop:recurrence}, the aggregate process $S_{\run} = \sum_{\runalt=\start}^{\run} \step_{\runalt} \snoise_{\runalt}$ is a martingale relative to the natural filtration $\filter_{\run}$ of $\sample_{\run}$.
Then, letting $R= \sup_{\state\in\feas} \norm{\state}$, we can bound the variance of each individual term of $S_{\run}$ as follows:
\begin{flalign}
\label{eq:MSE_local1}
\exof{\snoise_{\runalt}^{2}}
	&= \exof{\exof{\abs{\braket{\noise_{\runalt}}{\act_{\runalt} - \sol}}^{2} \given \filter_{\runalt-1}}}
	\notag\\
	&\leq \exof{\exof{\dnorm{\noise_{\runalt}}^{2} \norm{\act_{\runalt} - \sol}^{2} \given \filter_{\runalt-1}}}
	\notag\\
	&= \exof{\norm{\act_{\runalt} - \sol}^{2} \exof{\dnorm{\noise_{\runalt}}^{2} \given \filter_{\runalt-1}}}
	\notag\\
	&\leq R^{2} \vbound^{2},
\end{flalign}
where the first inequality follows from the definition of the dual norm
and the second one follows from \eqref{eq:MSE}. Consequently, by Doob's maximal inequality (\cref{thm:mg_convergence4} in \cref{app:aux}), we have:
\begin{equation}
\label{eq:MSE_local2}
\probof*{\sup_{0\leq \runalt \leq \run} S_{\runalt} \geq \epsilon}
	\leq \probof*{\sup_{0\leq \runalt \leq \run} \abs{S_{\runalt}} \geq \epsilon}
	\leq \frac{\exof{S_{\run}^{2}}}{\eps^{2}}
	\leq \frac{R^{2} \vbound^{2} \sum_{\runalt=\start}^{\run} \step_{\runalt}^{2}}{\eps^{2}},
	\end{equation}
where the last inequality follows from expanding $\exof{\abs{S_{\run}}^{2}}$, using \cref{eq:MSE_local1}, and noting that $\exof{\snoise_{\runalt}\snoise_{\ell}} = \exof{\exof{\snoise_{\runalt} \snoise_{\ell}} \given \filter_{\runalt\vee\ell-1}} = 0$ whenever $\runalt\neq\ell$.
Therefore, by picking $\step_{\run}$ so that $\sum_{\runalt=\start}^{\infty} \step_{\runalt}^{2} \leq \epsilon^{2}\delta/(2R^{2}\vbound^{2})$, \cref{eq:MSE_local2} gives
\begin{equation}
\label{eq:MSE_local3}
\probof*{\sup_{0\leq k \leq t} S_{\runalt} \geq \epsilon}
	\leq \frac{R^{2} \vbound^{2} \sum_{\runalt=\start}^{\run} \step_{\runalt}^{2}}{\eps^{2}}
	\leq \frac{R^{2} \vbound^{2} \sum_{\runalt=\start}^{\infty} \step_{\runalt}^{2}}{\eps^{2}}
	\leq \frac{\delta}{2}
	\quad
	\text{for all $\run$}.
\end{equation}
Since the above holds for all $\run$, our assertion follows.
\end{proofstep}

\begin{proofstep}{Controlling the submartingale error}
Again, fix some $\epsilon > 0$ and, with a fair amount of foresight, let $R_{\run} = (2\strong)^{-1} \sum_{\runalt=\start}^{\run} \step_{\runalt}^{2} \dnorm{\est\payv_{\runalt}}^{2}$.
By construction, $R_{\run}$ is a non-negative submartingale relative to $\filter_{\run}$.
We again establish that there exists step-size sequence $(\step_{\run})_{\run=\start}^{\infty}$ satisfying the summability condition \eqref{eq:L2L1} and such that
\begin{equation}
\label{eq:smg_noise_bound}
\probof*{\sup_{\run} R_{\run} \leq \eps}
	\geq 1 - \frac{\delta}{2}.
\end{equation}
To show this, Doob's maximal inequality for submartingales (\cref{thm:mg_convergence3}) gives
\begin{equation}
\label{eq:prob-bound2}
\probof*{\sup_{0 \leq \runalt \leq \run} R_{\runalt} \geq \eps} 
	\leq \frac{\exof{R_{\run}}}{\eps}
	\leq \frac{\vbound^{2} \sum_{\runalt=\start}^{\run} \step_{\runalt}^{2}}{2\strong\eps},
	\end{equation}
where we used the fact that $\exof{\cramped{\dnorm{\nabla\sobj(\act_{\run};\sample_{\run})}^{2}}} \leq \vbound^{2}$ for some finite $\vbound<\infty$.
Consequently, if we choose $\step_{\run}$ so that $\sum_{\runalt=\start}^{\infty} \step_{\runalt}^{2} \leq \strong\delta\eps/\vbound^{2}$, \cref{eq:prob-bound2} readily gives
\begin{equation}
\probof*{\sup_{0 \leq \runalt \leq \run} R_{\runalt} \geq \eps}
	\leq \frac{\vbound^{2} \sum_{\runalt=\start}^{\infty} \step_{\runalt}^{2}}{2\strong\eps}
	\leq \frac{\delta}{2}
	\quad
	\text{for all $\run$}
\end{equation}
Since the above is true for all $\run$, \cref{eq:smg_noise_bound} follows.
\end{proofstep}

\begin{proofstep}{Error aggregation}
To combine the above, fix some sufficiently small $\bar\eps>0$ so that $\fball(\sol, 3\bar\eps) \subset U$, where $U$ is the open neighborhood  given in \ref{eq:LVC}.
Furthermore, let $\nhd = \fball(\sol, \bar\eps)$ and pick a step-size sequence $\step_{\run}$ satisfying \eqref{eq:L2L1} and such that
\begin{equation}
\label{eq:step-small}
\sum_{\run=\start}^{\infty} \step_{\run}^{2}
	\leq \min\braces*{\frac{\delta\bar\eps^{2}}{2R^{2}\vbound^{2}}, \frac{\strong\delta\bar\eps}{\vbound^{2}}}.
\end{equation}
If $\act_{\start}\in\nhd$, it follows that $\fench(\sol,\score_{\start}) < \bar\eps$ by the definition of $\fball$ (cf.~\cref{def:metrics}).
Then, by \cref{eq:mg_noise_bound,eq:smg_noise_bound}, we get $\probof{\sup_{\run} S_{\run} \geq \bar\eps} \leq \delta/2$ and $\probof{\sup_{\run} R_{\run} \geq \bar\eps} \leq \delta/2$.
Consequently, with this choice of $\step_{\run}$, it follows that
\begin{equation}
\probof*{\sup\nolimits_{\run} \max\{S_{\run},R_{\run}\} \leq \bar\eps}
	\geq 1 - \delta/2 - \delta/2
	= 1 - \delta
\end{equation}
Then, letting $\fench_{\run} = \fench(\sol,\score_{\run})$ and arguing as in the proof of \cref{thm:global}, we may expand $\fench_{\run} = \fench(\sol,\score_{\run})$ to get
\begin{flalign}
\label{eq:Dbound-stoch}
\fench_{\run}
	&= \fench(\sol, \score_{\run} + \step_{\run}\est\payv_{\run})
	\notag\\
	&\leq \fench_{\run}
	+ \step_{\run} \braket{\payv(\act_{\run})}{\act_{\run} - \sol}
	+ \step_{\run} \snoise_{\run}
	+ \frac{\step_{\run}^{2}}{2\strong} \dnorm{\nabla\sobj(\act_{\run};\sample_{\run})}^{2}
\end{flalign}
with $\snoise_{\run} = \braket{\noise_{\run}}{\act_{\run}-\sol}$ defined as above.
Telescoping \eqref{eq:Dbound-stoch} then yields
\begin{flalign}
\label{eq:Dbound-n}
\fench_{\run}
	&\leq \fench_{\start}
	+ \sum_{\runalt=\start}^{\run} \step_{\runalt} \braket{\payv(\act_{\runalt})}{\act_{\runalt} - \sol}
	+ S_{\run}
	+ R_{\run}\\
	&\leq \bar\eps + \sum_{\runalt=\start}^{\run} \step_{\runalt}\braket{\payv(\act_{\runalt})}{\act_{\runalt} - \sol} + \bar\eps + \bar\eps,	\end{flalign}
with probability at least $1-\delta$.
Therefore, with probability at least $1-\delta$, we have
\begin{equation}
\label{eq:Fench-maximal}
\fench(\sol,\score_{\run})
	\leq 3\bar\eps + \sum_{\runalt=\start}^{\run} \step_{\runalt}\braket{\payv(\act_{\runalt})}{\act_{\runalt} - \sol}.
\end{equation}

Now, assume inductively that, for all $\runalt\leq\run$, we have $\fench(\sol,\score_{\runalt}) \leq 3\bar\eps$ or, equivalently, $\act_{\runalt} \in \fball(\sol, 3\bar\eps)$.
In turn, this implies that $\braket{\payv(\act_{\runalt})}{\act_{\runalt} - \sol} \leq 0$ for all $\runalt\leq \run$, and hence, by \eqref{eq:Fench-maximal}, that $\fench(\sol,\score_{\run}) \leq 3\bar\eps$ as well.
Since the base case $\act_{\start} \in \nhd = \fball(\sol, \bar\eps) \subset \fball(\sol, 3\bar\eps)$ is satisfied automatically, we conclude that $\act_{\run}$ stays in $\fball(\sol, 3\bar\eps) \subset U$ for all $\run$ with probability at least $1-\delta$.
Our claim then follows by conditioning on this event and repeating the same steps as in the proof of \cref{thm:global}.
\end{proofstep}	
\end{proofof}

We close this section by revisiting the notion of weak coherence (\cref{def:VC-weak}).
In view of \cref{def:VC-local}, we see that weak coherence mixes elements of both global \emph{and} local coherence:
on the one hand, it posits the existence of a (global) minimizer $\base\in\sols$ for which \eqref{eq:VC} holds globally, thus satisfying the second part of \cref{def:VC};
on the other hand, minimizers other than $\base$ are only required to satisfy \eqref{eq:VC} locally (though they need not be locally coherent themselves).
From a stability viewpoint, this means that individual elements of a weakly coherent set may be stable but not necessarily attracting (even locally).
However, taken as a whole, weakly coherent sets are \emph{globally} attracting:

\begin{theorem}
\label{thm:weak}
Suppose that \eqref{eq:opt} is weakly coherent.
Then, under \cref{asm:diff,asm:L2,asm:reciprocity}, $\act_{\run}$ converges with probability $1$ to a \textpar{possibly random} minimum point of \eqref{eq:opt}.
\end{theorem}

\begin{proof}
The proof is essentially a combination of the proofs of \cref{thm:global,thm:local}, so we only provide the main arguments and omit the minor details.

The first observation is that the conclusion of \cref{prop:recurrence} only requires the first part of \cref{def:VC-weak} (simply take $\sol=\base$ in the proof of \cref{prop:recurrence}).
From this, we conclude that the generated sequence $\act_{\run}$ admits \as a subsequence $\act_{\run_{\runalt}}$ converging to some $\sol\in\sols$.

In view of this, arguing as in the proof of \cref{thm:local} allows us to conclude that, with probability $1$, $\act_{\run}$ itself remains in some neighborhood $\nhd$ of $\sol$ such that $\braket{\nabla\obj(\state)}{\state-\sol} \geq 0$ for all $\state\in\nhd$.
To see this, recall first that $\sum_{\run=\start}^{\infty}\step_{\run}^{2}<\infty$;
thus, by starting at some sufficiently large $\run_{0}$ (recall that $\act_{\run}$ comes arbitrarily close to $\sol$ infinitely many times), we can assume without loss of generality that \eqref{eq:step-small} is satisfied for any $\eps,\delta>0$.
Since $\delta$ is arbitrary, this means that \eqref{eq:Fench-maximal} actually holds with probability $1$, and our claim follows.

The above shows that $\braket{\nabla\obj(\act_{\run})}{\act_{\run} - \sol}$ for all sufficiently large $\run$.
Thus, going back to the proof of \cref{thm:global}, it follows that the Fenchel coupling $\fench_{\run} = \fench(\sol,\score_{\run})$ converges.
Since $\liminf_{\run\to\infty} \fench_{\run} = 0$ (by \cref{asm:reciprocity} and the fact that $\act_{\run_{\runalt}} = \mirror(\score_{\run_{\runalt}}) \to \sol$), we conclude that $\lim_{\run\to\infty} \act_{\run} = \sol$, and our proof is complete.
\end{proof}

\section{Sharp minima and applications}
\label{sec:sharp}

Given the randomness involved at each step, obtaining an almost sure (or high probability) bound for the convergence speed of the last iterate of \ac{SMD} is fairly involved:
indeed, in contrast to the ergodic rate analysis of \ac{SMD} for convex programs, there is no intrinsic averaging in the algorithm's last iterate, so it does not seem possible to derive a precise black box convergence rate for $\act_{\run}$.
Essentially, as in the analysis of \cref{sec:local}, a single ``unlucky'' gradient sample could violate any convergence speed estimate that is probabilistically independent of any finite subset of realizations.

Despite this difficulty, if \ac{SMD} is run with a \emph{surjective} mirror map, we show below that $\act_{\run}$ reaches a minimum point of \eqref{eq:opt} in a \emph{finite} number of iterations
for a large class of optimization problems that admit \emph{sharp} minima (see below).
As we noted in \cref{sec:setup}, an important example of a surjective mirror map is the standard Euclidean projection $\Eucl(\dstate) = \argmin_{\state\in\states} \norm{\dstate-x}_{2}$.
The resulting descent method is the well-known \acdef{SGD} algorithm (cf.~\cref{alg:SGD} below), so our results in this section also provide new insights into the behavior of \ac{SGD}.

\subsection{Definition and characterization}
\label{subsec:sharp}

The starting point of our analysis is Polyak's fundamental notion of a \emph{sharp minimum} \cite[Chapter~5.2]{Pol87}, which describes functions that grow at least linearly around their minimum points:

\begin{definition}
\label{def:sharp}
We say that $\sol\in\states$ is a \emph{$\sharp$-sharp} (local) minimum of $\obj$ if
\begin{equation}
\label{eq:sharp}
\obj(\state)
	\geq \obj(\sol) + \sharp \norm{\state - \sol}
	\quad
	\text{for some $\sharp>0$ and all $\state$ sufficiently close to $\sol$}.
\end{equation}
\end{definition}

Polyak's original definition concerned global sharp minima of unconstrained (convex) optimization problems;
by contrast, the above definition is tailored to local optima of constrained (and possibly non-convex) programs.
In particular, \cref{def:sharp} implies that sharp minima are isolated (local) minimizers of $\obj$, and they remain invariant under small perturbations of $\obj$ (assuming of course that such a minimizer exists in the first place).
In what follows, we shall omit the modifier ``local'' for concision and rely on the context to resolve any ambiguities.

Sharp minima admit a useful geometric interpretation in terms of the polar cone of $\states$.
To state it, recall first the following basic facts from convex analysis:

\begin{definition}
\label{def:cone}
Let $\states$ be a closed convex subset of $\R^{\vdim}$.
Then:
\begin{enumerate}
\item
The \emph{tangent cone} $\tcone(\base)$ to $\states$ at $\base$ is defined as the closure of the set of all rays emanating from $\base$ and intersecting $\states$ in at least one other point.
\item
The \emph{dual cone} $\dcone(\base)$ to $\states$ at $\base$ is the dual set of $\tcone(\base)$, viz. $\dcone(\base) = \setdef{\dstate\in\R^{\vdim}}{\braket{\dstate}{z}\geq0 \; \text{for all $z\in\tcone(\base)$}}$.
\item
The \emph{polar cone} $\pcone(\base)$ to $\states$ at $\base$ is the polar set of $\tcone(\base)$, viz. $\pcone(\base) = - \dcone(\base) = \setdef{\dstate\in\R^{\vdim}}{\braket{\dstate}{z}\leq0\;\text{for all $z\in\tcone(\base)$}}$.
\end{enumerate}
\end{definition} 
\vspace{1ex}

The above gives the following geometric characterization of sharp minima:

\begin{lemma}
\label{lem:sharp}
If $\sol\in\states$ is a $\sharp$-sharp minimum of $\obj$, we have
\begin{equation}
\label{eq:sharp-tangent}
\braket{\nabla\obj(\sol)}{z}
	\geq \sharp\norm{z}
	\quad
	\text{for all $z\in\tcone(\sol)$}.
\end{equation}
In particular, $\nabla\obj(\sol)$ belongs to the topological interior of $\dcone(\state)$.
Conversely, if \eqref{eq:sharp-tangent} holds and $\obj$ is convex, $\sol$ is sharp.
\end{lemma}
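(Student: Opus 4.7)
The plan is to prove both directions by reducing everything to the first-order Taylor expansion of $\obj$ at $\sol$ combined with the defining inequality for sharpness.

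For the forward implication, I would fix $z\in\tcone(\sol)$ and exploit the definition of the tangent cone: there exist $x_{k}\in\feas$ and $\lambda_{k}>0$ such that $x_{k}\to\sol$ and $\lambda_{k}(x_{k}-\sol)\to z$. Since the $x_{k}$ eventually lie in the neighborhood on which \eqref{eq:sharp} holds, I get $\obj(x_{k})-\obj(\sol)\geq\sharp\norm{x_{k}-\sol}$, while differentiability of $\obj$ at $\sol$ yields $\obj(x_{k})-\obj(\sol)=\braket{\nabla\obj(\sol)}{x_{k}-\sol}+o(\norm{x_{k}-\sol})$. Multiplying by $\lambda_{k}$ and passing to the limit (using that the $o(\norm{x_{k}-\sol})$ remainder, divided by $\norm{x_{k}-\sol}$, tends to zero while $\lambda_{k}\norm{x_{k}-\sol}\to\norm{z}$) gives the key inequality $\braket{\nabla\obj(\sol)}{z}\geq\sharp\norm{z}$ for every $z\in\tcone(\sol)$.

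For the ``interior of the dual cone'' claim, I would use this inequality directly: for any $\payv\in\dspace$ with $\dnorm{\payv-\nabla\obj(\sol)}<\sharp$ and any $z\in\tcone(\sol)$, one has
\begin{equation*}
\braket{\payv}{z}
\;\geq\; \braket{\nabla\obj(\sol)}{z}-\dnorm{\payv-\nabla\obj(\sol)}\,\norm{z}
\;\geq\; \bigl(\sharp-\dnorm{\payv-\nabla\obj(\sol)}\bigr)\norm{z}\;\geq\;0,
\end{equation*}
so $\payv\in\dcone(\sol)$. Hence an entire open $\dnorm{\cdot}$-ball of radius $\sharp$ around $\nabla\obj(\sol)$ lies in $\dcone(\sol)$, which is exactly the statement that $\nabla\obj(\sol)\in\intr\dcone(\sol)$.

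For the converse under convexity, I would invoke the subgradient inequality $\obj(x)\geq\obj(\sol)+\braket{\nabla\obj(\sol)}{x-\sol}$ valid for all $x\in\feas$. For such $x$, the vector $z=x-\sol$ belongs to $\tcone(\sol)$ (the tangent cone of a convex set contains $\feas-\sol$), so by hypothesis $\braket{\nabla\obj(\sol)}{x-\sol}\geq\sharp\norm{x-\sol}$, yielding \eqref{eq:sharp} globally. I do not anticipate a real obstacle here; the only subtle point is the limit argument in the first step, where one must be careful that the little-$o$ remainder is killed when divided by $\lambda_{k}^{-1}$ rather than by $\norm{x_{k}-\sol}$, but this follows from the boundedness of $\lambda_{k}\norm{x_{k}-\sol}$.
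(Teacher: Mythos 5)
Your proof is correct and follows essentially the same route as the paper's: both directions hinge on combining the sharpness inequality with the first-order behaviour of $\obj$ at $\sol$ (forward direction) and with the gradient inequality for convex functions (converse), the only cosmetic difference being that you use the sequential characterization of $\tcone(\sol)$ where the paper differentiates along rays $\sol+\tau z$ and then passes to the closure. You also spell out the $\nabla\obj(\sol)\in\intr\dcone(\sol)$ claim via the $\sharp$-ball argument, which the paper leaves implicit; that addition is sound.
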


%
%

\begin{proofof}{Proof of \cref{lem:sharp}}
For the direct implication, fix some $\state\in\states$ satisfying \eqref{eq:sharp}, and let $z = \state - \sol \in\tcone(\sol)$.
Then, by the definition of a sharp minimum, we get
\begin{equation}
\obj(\sol + \tau z)
	\geq \obj(\sol) + \sharp\tau\norm{z}
	\quad
	\text{for all $\tau\in[0,1]$}.
\end{equation}
In turn, this implies that
\begin{equation}
\frac{\obj(\sol + tz) - \obj(\sol)}{t}
	\geq \sharp \norm{z}
	\quad
	\text{for all sufficiently small $t>0$}.
\end{equation}
Hence, taking the limit $\tau\to0^{+}$, we get $\braket{\nabla\obj(\sol)}{z} \geq \sharp\norm{z}$, and our claim follows from the definition of $\tcone(\sol)$ as the closure of the set of all rays emanating from $\sol$ and intersecting $\states$ in at least one other point.
As for the converse implication, simply note that $\obj(\state) - \obj(\sol) \geq \braket{\nabla\obj(\sol)}{\state - \sol} \geq \sharp\norm{\state - \sol}$ if $\obj$ is convex.
\end{proofof}

\begin{example}[Linear programs]
\label{ex:linear}
A first important class of examples of functions that admit sharp minima is that of generic linear programs.%
\footnote{``Generic'' means here that $\states$ is a polytope, $\obj\from\states\to\R$ is affine, and $\obj$ is constant only on the zero-dimensional faces of $\states$.
Any linear program can be turned into a generic one after an arbitrarily small perturbation.}
Indeed, by definition, a linear function grows (exactly) linearly around its minimum points so, by genericity,
\end{example}
\vspace{1ex}

\begin{example}[Concave minimization]
\label{ex:concave}
For a non-convex class of examples, let $\obj\from\states\to\R$ be a strictly \emph{concave} function defined over a convex polytope $\states$ of $\R^{\vdim}$.
Concavity implies that $\obj$ is superharmonic, \ie
\begin{equation}
\label{eq:super}
\Delta\obj(\state)
	= \sum_{i=1}^{\vdim} \frac{\pd^{2}\obj}{\pd \state_{i}^{2}} \leq 0
\end{equation}
for all $\state\in\states$.%
\footnote{We tacitly assume above that $\obj$ is twice-differentiable but this conclusion still holds even if $\obj$ is not differentiable.}
By the minimum principle for superharmonic functions, the minimum of $\obj$ over any connected region $\region$ of $\states$ is attained at the boundary of $\region$.
Hence, by strict concavity, we conclude that the local minima of $\obj$ are attained at $0$-dimensional faces of $\states$, and they are \emph{de facto} sharp (simply note that $\obj$ is strictly concave along any ray of the form $\sol+tz$, $z\in\tcone(\sol)$).
\end{example}
\vspace{1ex}

Sharp minima have several other interesting and useful properties.
First, by \cref{lem:sharp}, sharp minimum points are locally coherent.
To see this, simply note that for all $\state \in \states$ sufficiently close to $\sol$ (with $\state \neq \sol$), we have $z = \state - \sol \in \tcone(\sol)$ and $\braket{\nabla\obj(\sol)}{z} \geq \sharp\norm{z} > 0$.
Consequently, $\langle \nabla \obj(\sol), \state-\sol\rangle > 0$, implying by continuity that $\braket{\nabla\obj(\state)}{\state-\sol} > 0$ for all $\state$ in some open neighborhood of $\sol$ (excluding $\sol$).
In addition, 
if \eqref{eq:opt} is variationally coherent, then a sharp (local) minimum is globally sharp as well.

A second important property is that the dual cone $\dcone(\sol)$ of a sharp minimum must necessarily have nonempty topological interior \textendash\ since it contains $\nabla\obj(\sol)$ by \cref{lem:sharp}.
This implies that sharp minima can only occur at \emph{corners} of $\states$:
for instance, if a sharp minimum were an interior point of $\states$, the dual cone to $\states$ at $\sol$ would be a proper linear subspace of the ambient vector space, so it would have no topological content (see also \cref{ex:concave} above).

\subsection{Global convergence in a finite number of iterations}
\label{subsec:finite-global}

We now turn to showing that, if a variationally coherent program admits a sharp minimum $\sol$, \cref{alg:SMD} reaches $\sol$ in a finite number of iterations \as.
The interesting feature here is that convergence is guaranteed to occur in a \emph{finite} number of iterations:
specifically, there exists some (random) $\run_{0}$ such that $\act_{\run} = \sol$ for all $\run\geq\run_{0}$.
In general, this is a fairly surprising property for a first-order descent scheme, even if one considers the ergodic average $\run^{-1} \sum_{\runalt=\start}^{\run} \act_{\runalt}$:
a priori, a single ``bad'' sample could kick $\act_{\run}$ away from $\sol$, which is the reason why (ergodic) convergence rates are typically asymptotic.

The key intuition behind our analysis is that sharp minima must occur at corners of $\states$ (as opposed to interior points).
As a further key insight, when the solution of \eqref{eq:opt} occurs at a corner, noisy gradients may still play the role of a random disturbance;
however, since they are applied to the dual process $\score_{\run}$, a surjective mirror map would immediately project $\score_{\run}$ back to a corner of $\states$ if $\score_{\run}$ has progressed far enough in the interior of the polar cone to $\states$ at $\state$.
This ensures that the last iterate $\act_{\run}$ of \ac{SMD} will stay \emph{exactly} at the optimal point, irrespective of the persistent noise entering \cref{alg:SMD}.

Exploiting these insights and the structural properties of sharp minima, we have:

\begin{theorem}
\label{thm:sharp-global}
Suppose that \eqref{eq:opt} is variationally coherent.
If $\obj$ admits a \textpar{necessarily unique} sharp minimum $\sol$, and \cref{alg:SMD} is run with a surjective mirror map and \cref{asm:diff,asm:L2,asm:reciprocity} hold, $\act_{\run}$ converges to $\sol$ in a finite number of steps \as.
More precisely, we have
\begin{equation}
\probof{\textup{$\act_{\run}=\sol$ for all sufficiently large $\run$}}
	= 1
\end{equation}
\end{theorem}

\begin{corollary}
\label{cor:sharp-convex}
Let $\obj$ be a non-degenerate quasi-convex \textpar{or pseudo-convex, or convex} function and let $\sol$ be a sharp minimum of $\obj$.
Then, with assumptions as above, $\act_{\run}$ reaches $\sol$ in a finite number of steps \as.
\end{corollary}

The prototypical example of a surjective mirror map is the Euclidean projector $\Eucl(\dstate) = \argmin_{\state\in\states} \norm{\dstate-\state}_{2}$ induced by the quadratic regularization function $\hreg(\state) = \norm{\state}_{2}^{2}/2$ (cf.~\cref{ex:Eucl}).
The resulting descent scheme is the well-known \acf{SGD} algorithm (see \cref{alg:SGD} for a pseudocode implementation), for which we obtain the following novel convergence result:


\begin{algorithm}[tbp]
\caption{Stochastic gradient descent (\ac{SGD})} 
\label{alg:SGD}

\tt
\begin{algorithmic}[1]
\Require
	step-size sequence $\step_{\run}>0$
\State
	choose $\score\in\R^{\vdim}$, $\act=\Eucl(\score)$
	\Comment{initialization}
\For{$\run=\running$}
	\State
		get $\est\payv = -\nabla\sobj(\act;\sample)$
		\Comment{oracle feedback}
	\State
		set $\score \leftarrow \score + \step_{\run} \est\payv$
		\Comment{gradient step}
	\State
		set $\act \leftarrow \Eucl(\score)$
		\Comment{set state}
\EndFor
\\
\Return $\act$
	\Comment{output}
\end{algorithmic}
\end{algorithm}


\begin{corollary}
\label{cor:linear}
If \eqref{eq:opt} is a generic linear program, the last iterate $\act_{\run}$ of \ac{SGD} reaches its \textpar{necessarily unique} solution in a finite number of steps \as.
\end{corollary}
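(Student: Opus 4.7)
\begin{proofof}{Proof sketch of \cref{cor:linear}}
The plan is to verify that a generic linear program satisfies all the hypotheses of \cref{thm:sharp-global} and then invoke it directly. Since $\proj_{\feas}$ is the mirror map induced by the quadratic regularizer $h(x) = \tfrac{1}{2}\norm{x}_{2}^{2}$, it is surjective onto $\feas$ (cf.~\cref{ex:Eucl}), so that \ac{SGD} is a bona fide instance of \cref{alg:SMD} with a surjective mirror map. It therefore remains to check two things: (i) variational coherence of the underlying problem; and (ii) existence of a sharp minimum.

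For (i), the objective $\obj$ of a generic linear program is affine, hence convex; by \cref{prop:quasi_convex} (or the convex example preceding it), every convex program is variationally coherent, so \eqref{eq:VC} holds. For (ii), the feasible region $\feas$ is a polytope and $\obj$ is affine and nonconstant on each one-dimensional face, so $\obj$ attains its minimum at a unique vertex $\sol$ of $\feas$. By the converse part of \cref{lem:sharp} applied to this convex $\obj$, it suffices to exhibit $\sharp > 0$ with $\braket{\nabla\obj(\sol)}{z} \geq \sharp\norm{z}$ for all $z\in\tcone(\sol)$. Since $\sol$ is a vertex of the polytope $\feas$, the tangent cone $\tcone(\sol)$ is a finitely generated polyhedral cone whose extreme rays $z_{1},\dotsc,z_{m}$ (normalized to unit length) point along the edges of $\feas$ emanating from $\sol$; genericity forces $\braket{\nabla\obj(\sol)}{z_{i}} > 0$ for each $i$, and taking $\sharp = \min_{i} \braket{\nabla\obj(\sol)}{z_{i}} > 0$ together with a convexity/normalization argument over the generators yields the desired bound on all of $\tcone(\sol)$. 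This is precisely the content of the corollary immediately following \cref{lem:sharp}.

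Finally, \cref{assump:0,assump:1} are trivially satisfied in the linear setting: $\obj$ is smooth with constant gradient $\nabla\obj$, so Lipschitz continuity of $\exof{\nabla\sobj(\argdot\,;\sample)} = \nabla\obj$ is automatic, and the $L^{2}$-boundedness of $\nabla\sobj(x;\sample)$ holds by hypothesis on the gradient samples (e.g.\ under the additive-noise model $\nabla\sobj(x;\sample) = \nabla\obj + \noise$ with $\exof{\dnorm{\noise}^{2}} < \infty$). \cref{assump:2} for the Euclidean regularizer is standard (cf.~the remarks following \cref{lem:Fenchel}). Invoking \cref{thm:sharp-global} then gives a \textpar{random} index $\run_{0}$ such that $\act_{\run} = \sol$ for every $\run\geq\run_{0}$, with probability $1$.

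The main obstacle \textendash\ and really the only nontrivial step \textendash\ is the verification of sharpness at $\sol$ from genericity; everything else is a direct appeal to \cref{thm:sharp-global} and bookkeeping of assumptions.
\end{proofof}
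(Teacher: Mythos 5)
Your proposal is correct and follows exactly the route the paper intends: the corollary is stated as an immediate consequence of \cref{thm:sharp-global}, combining the surjectivity of the Euclidean projector (\cref{ex:Eucl}), variational coherence of convex programs, and the sharpness of the unique solution of a generic linear program (the corollary following \cref{lem:sharp}). Your extra detail on deriving the sharpness bound over the whole tangent cone from its extreme rays is a sound elaboration of a step the paper leaves implicit, but it does not constitute a different approach.
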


With all this said and done, we proceed with the proof of \cref{thm:sharp-global}:

\begin{proofof}{Proof of \cref{thm:sharp-global}}
Since $\sol$ is a $\sharp$-sharp minimum, there exists a sufficiently small open neighborhood $\nhd$ of $\sol$ such that $\braket{\nabla\obj(\state)}{z} \geq \sharp\norm{z}/2$ for all $z\in\tcone(\sol)$ and all $\state \in \nhd$.
By \cref{thm:global}, $\act_{\run}$ converges to $\sol$ \as, so there exists some (random) $\run_{0}$ such that $\act_{\run} \in \nhd$ for all $\run\geq\run_{0}$.
In turn, this implies that $\braket{\nabla\obj(\act_{\run})}{z} \geq \sharp\norm{z}/2$ for all $\run\geq \run_{0}$.
Thus,
continuing to use the notation $\payv(\act_{\run}) = -\nabla\obj(\act_{\run})$ and $\noise_{\run} = \nabla\obj(\act_{\run}) -\nabla\sobj(\act_{\run};\sample_{\run})$, we get for all $z\in\tcone(\sol)$ with $\norm{z}\leq1$:
\begin{flalign}
\label{eq:sharp-bound1}
\braket{\score_{\run}}{z}
	&= \braket*{\score_{\run_{0}} + \sum_{\runalt=\run_{0}}^{\run} \step_{\runalt} \est\payv_{\runalt}}{z}
	\notag\\
	&= \braket{\score_{\run_{0}}}{z}
	+ \sum_{\runalt=\run_{0}}^{\run} \step_{\runalt} \braket{\payv(\act_{\runalt})}{z}
	+ \sum_{\runalt=\run_{0}}^{\run} \step_{\runalt} \braket{\noise_{\runalt}}{z}
	\notag\\
	&\leq \dnorm{\score_{\run_{0}}}
	- \frac{\sharp}{2} \sum_{\runalt=\run_{0}}^{\run} \step_{\runalt}
	+ \sum_{\runalt=\run_{0}}^{\run} \step_{\runalt} \braket{\noise_{\runalt}}{z},
\end{flalign}
where, in the last line, we used the fact that $\act_{\runalt}\in\nhd$ for all $\runalt\geq\run_{0}$.

As we discussed in the proof of \cref{thm:global}, $\step_{\run}\noise_{\run}$ is a \acl{MDS} relative to the natural filtration $\filter_{\run}$ of $\sample_{\run}$.
Hence, by the \acl{LLN} for martingale differences (cf.~\cref{thm:mg_convergence1} for $p=2$ and $\tau_{\run} = \sum_{\runalt=0}^{\run} \step_{\runalt}$), we get
\begin{equation}
\lim_{\run\to0} \frac{\sum_{\runalt=\run_{0}}^{\run} \step_{\runalt} \noise_{\runalt}}{\sum_{\runalt=\run_{0}}^{\run} \step_{\runalt}}
	= 0
	\quad
	\as.
\end{equation}
Thus, there exists some $\run^{\ast}$ such that $\dnorm{\sum_{\runalt=\run_{0}}^{\run} \step_{\runalt} \noise_{\runalt}} \leq (\sharp/4) \sum_{\runalt=\run_{0}}^{\run} \step_{\runalt}$ for all $\run\geq\run^{\ast}$ \as.
\cref{eq:sharp-bound1} then implies
\begin{flalign}
\label{eq:sharp-bound2}
\braket{\score_{\run}}{z}
	&\leq  \dnorm{\score_{\run_{0}}}
	- \frac{\sharp}{2} \sum_{\runalt=\run_{0}}^{\run} \step_{\runalt}
	+ \sum_{\runalt=\run_{0}}^{\run} \step_{\runalt} \braket{ \noise_{\runalt}}{z}
	\notag\\
	&\leq \dnorm{\score_{\run_{0}}}
	- \frac{\sharp}{2} \sum_{\runalt=\run_{0}}^{\run} \step_{\runalt} + \frac{\sharp}{4} \sum_{\runalt=\run_{0}}^{\run} \step_{\runalt} = \dnorm{\score_{\run_{0}}} - \frac{\sharp}{4} \sum_{\runalt=\run_{0}}^{\run} \step_{\runalt},
\end{flalign}
where we used the assumption that $\norm{z}\leq1$.
Since $\sum_{\runalt=\run_{0}}^{\run} \step_{\runalt}\to\infty$ as $\run\to\infty$,
we get $\braket{\score_{\run}}{z}\to-\infty$ with probability $1$.

To proceed, we claim that $\dsol + \pcone(\sol)\subseteq\mirror^{-1}(\sol)$ whenever $\mirror(\dsol) = \sol$,
\ie $\mirror^{-1}(\sol)$ contains all cones of the form $\dsol + \pcone(\sol)$ for $\dsol\in\mirror^{-1}(\sol)$.
Indeed, note first that $\sol = \mirror(\dsol)$ if and only if $\dsol\in\pd \hreg(\sol)$, where $\pd \hreg(\sol)$ is the set of all subgradients of $h$ at $\sol$ \cite{Roc70}.
Therefore, it suffices to show that $\dsol + w \in \pd \hreg(\sol)$ whenever $w\in\pcone(\sol)$.
To that end, note that the definition of the polar cone gives
\begin{equation}
\braket{w}{\state-\sol}
	\leq 0
	\quad
	\text{for all $\state\in\states$, $w\in\pcone(\sol)$},
\end{equation} 
and hence
\begin{equation}
\hreg(\state)
	\geq \hreg(\sol) + \braket{\dsol}{\state - \sol}
	\geq \hreg(\sol) + \braket{\dsol + w}{\state - \sol}.
\end{equation}
The above shows that $\dsol +w \in \pd \hreg(\sol)$, as claimed.

With $\mirror$ surjective, the set $\mirror^{-1}(\sol)$ is nonempty, so it suffices to show that $\score_{\run}$ lies in the cone $\dsol + \pcone(\sol)$ for some $\dsol\in\mirror^{-1}(\sol)$ and all sufficiently large $\run$.
To do so, simply note that $\score_{\run} \in \dsol + \pcone(\sol)$ if and only if $\braket{\score_{\run} - \dsol}{z} \leq 0$ for all $z\in\tcone(\sol)$ with $\norm{z}=1$.
Since $\braket{\score_{\run}}{z}$ converges to $-\infty$ \as, our assertion is immediate.
\end{proofof}

\subsection{Local convergence in a finite number of iterations}

Our convergence analysis for locally coherent sets of minimizers (cf.~\cref{sec:local}),
showed that \ac{SMD} converges locally with high probability.
Our last result in this section complements this analysis by showing that, with high probability, \ac{SMD} converges locally to sharp local minima in a \emph{finite} number of iterations:

\begin{theorem}
\label{thm:sharp-local}
Let $\sol$ be a sharp \textpar{local} minimum of $\obj$, and fix some confidence level $\delta>0$.
If \cref{alg:SMD} is run with a surjective mirror map and \cref{asm:diff,asm:L2,asm:reciprocity} hold, there exists an open neighborhood $\nhd$ of $\sol$, independent of $\delta$, such that
\begin{equation}
	\probof{\textup{$\act_{\run}=\sol$ for all sufficiently large $\run$} \given \act_{\start}\in \nhd}
	\geq 1-\delta,
\end{equation}
provided that the algorithm's step-size sequence $\step_{\run}$ is small enough.
\end{theorem}

Given that local minimizers of concave minimization programs are sharp, an application of \cref{thm:sharp-local} gives the following convergence result for \ac{SGD}:

\begin{corollary}
\label{cor:concave}
Suppose that $\obj$ is strictly concave as in \cref{ex:concave}.
Then, under assumptions \cref{asm:diff,asm:L2}, the last iterate of \ac{SGD} converges locally to a local minimum of \eqref{eq:opt} with arbitrarily high probability.
\end{corollary}

\begin{proof}[Proof of \cref{thm:sharp-local}]
Under the stated assumptions, \cref{thm:local} implies that there exists an open neighborhood $\nhd$ of $\sol$ such that $\probof{\lim_{\run\to\infty} \act_{\run} = \sol \given \act_{0} \in \nhd} \geq 1-\delta$.
In turn, this means that there exists some (random) $\run_{0}$ which is finite with probability at least $1-\delta$ and is such that $\braket{\nabla\obj(\state_{\run})}{z} \geq \sharp\norm{z}/2$ for all $\run\geq \run_{0}$ (by the sharpness assumption).
Our assertion then follows by conditioning on this event and proceeding as in the proof of \cref{thm:sharp-global}.
\end{proof}

We close this section by noting that the convergence of $\act_{\run}$ in a finite number of steps is a unique feature of \emph{lazy} descent schemes with a surjective mirror map.
For example, if we consider the \emph{greedy} (or ``\emph{eager}'') projected descent scheme
\begin{equation}
\label{eq:SGD-greedy}
\act_{\run+1}
	= \Eucl(\act_{\run} - \step_{\run} \nabla\sobj(\act_{\run};\sample_{\run})),
\end{equation}
it is not possible to obtain a result similar to \cref{thm:sharp-global,thm:sharp-local} without further assumptions on the stochasticity affecting \eqref{eq:opt}.
To see why, assume that $\sol$ is a sharp minimum of \eqref{eq:opt} and $\act_{\run} = \sol$ for some $\run$.
If the sampled gradient $\nabla\sobj(\act_{\run};\sample_{\run})$ attains all directions with positive probability (more precisely, if the unit vector $\nabla\sobj(\act_{\run};\sample_{\run})/\dnorm{\nabla\sobj(\act_{\run};\sample_{\run})}$ is supported on the entire unit sphere $\sphere^{\vdim}$ of $\R^{\vdim}$), there exists some $\delta>0$ such that
\begin{equation}
\probof{\nabla\sobj(\act_{\run};\sample_{\run}) \notin \dcone(\sol)}
	\geq \delta
	\quad
	\text{for all $\run$}.
\end{equation}
We thus obtain
\begin{equation}
\probof{\act_{\run+1}\neq\sol \given \act_{\run}=\sol}
	\geq \delta
	\quad
	\text{for all $\run$},
\end{equation}
implying in turn that $\act_{\run}$ cannot converge to $\sol$ in a finite number of iterations.
We find this property of lazy descent schemes particularly appealing as it ensures very fast convergence in the presence of sharp minima.

\section{Numerical experiments}
\label{sec:numerics}

In this section, we validate the theoretical analysis of the previous sections via a series of numerical experiments.
As a first illustration of \cref{thm:global,thm:weak}, we begin by plotting the generated trajectories of \eqref{eq:SMD} for two non-convex test functions satisfying the coherence requirements of \cref{def:VC,def:VC-weak} respectively.
Referring to \cref{fig:portraits} for the detailed expressions, the specific setup is as in \cref{ex:noisy} with $\noise$ following a standard Gaussian distribution;
\eqref{eq:SMD} was then run with the Euclidean projector of \cref{ex:Eucl} and a step-size sequence $\step_{\run} \propto 1/\run$.
In both cases, the (randomly) generated trajectories of \eqref{eq:SMD} are seen to converge to a minimum point of \eqref{eq:opt}, even when the problem's minimum set is non-convex (as in the second example plotted in \cref{fig:portraits}).

To go beyond globally coherent problems, we also test the convergence of \eqref{eq:SMD} against the widely used Rosenbrock benchmark of \cref{eq:Rosenbrock}.
This test function admits a unique global minimum point at $\sol = (1,\dotsc,1)$;
however, this minimum is at the lowest point of a very flat and thin parabolic valley which is notoriously difficult for first-order methods to traverse \cite{Ros60}.
Because of the parabolic shape of this valley, the problem is not globally coherent (there are rays emanating from $\sol$ along which $\obj$ fails to be nondecreasing) but an easy algebraic calculation in the spirit of \cref{ex:star} shows that $\sol$ is locally coherent.


\begin{figure}[tbp]
\subfigure
[$\obj(r,\theta) = (3 + \sin(5\theta) + \cos(3\theta)) r^{2} (5/3 - r)$ in polar coordinates ($0\leq r\leq1$, $0\leq\theta\leq 2\pi$).]
{\includegraphics[width=.48\textwidth]{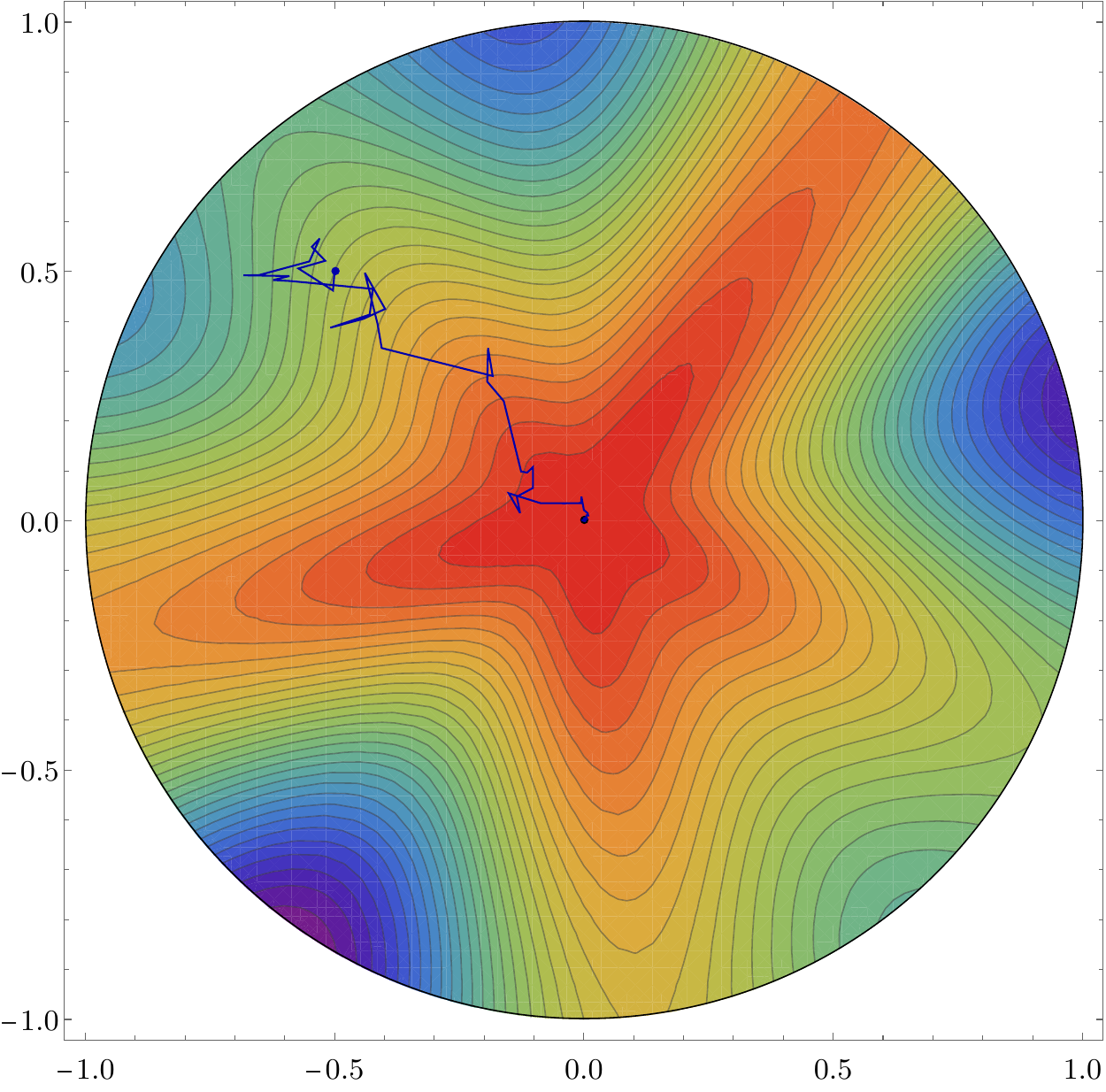}
\quad
\includegraphics[width=.48\textwidth]{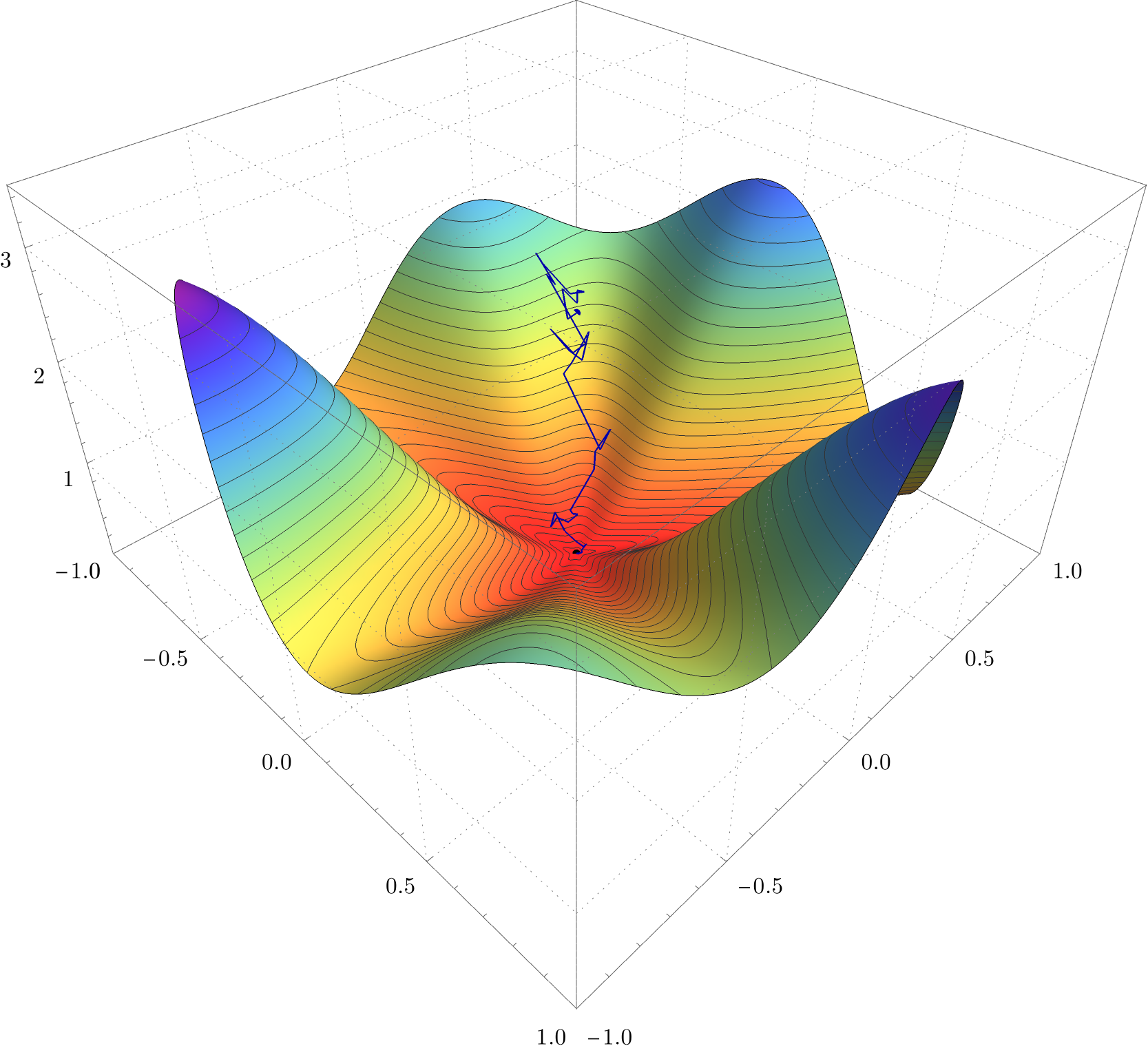}}
\\%
\subfigure
[$\obj(\state_{1},\state_{2}) = \state_{1}^{2} \state_{2}^{2}$, $-1\leq\state_{1},\state_{2} \leq 1$.]
{\includegraphics[width=.48\textwidth]{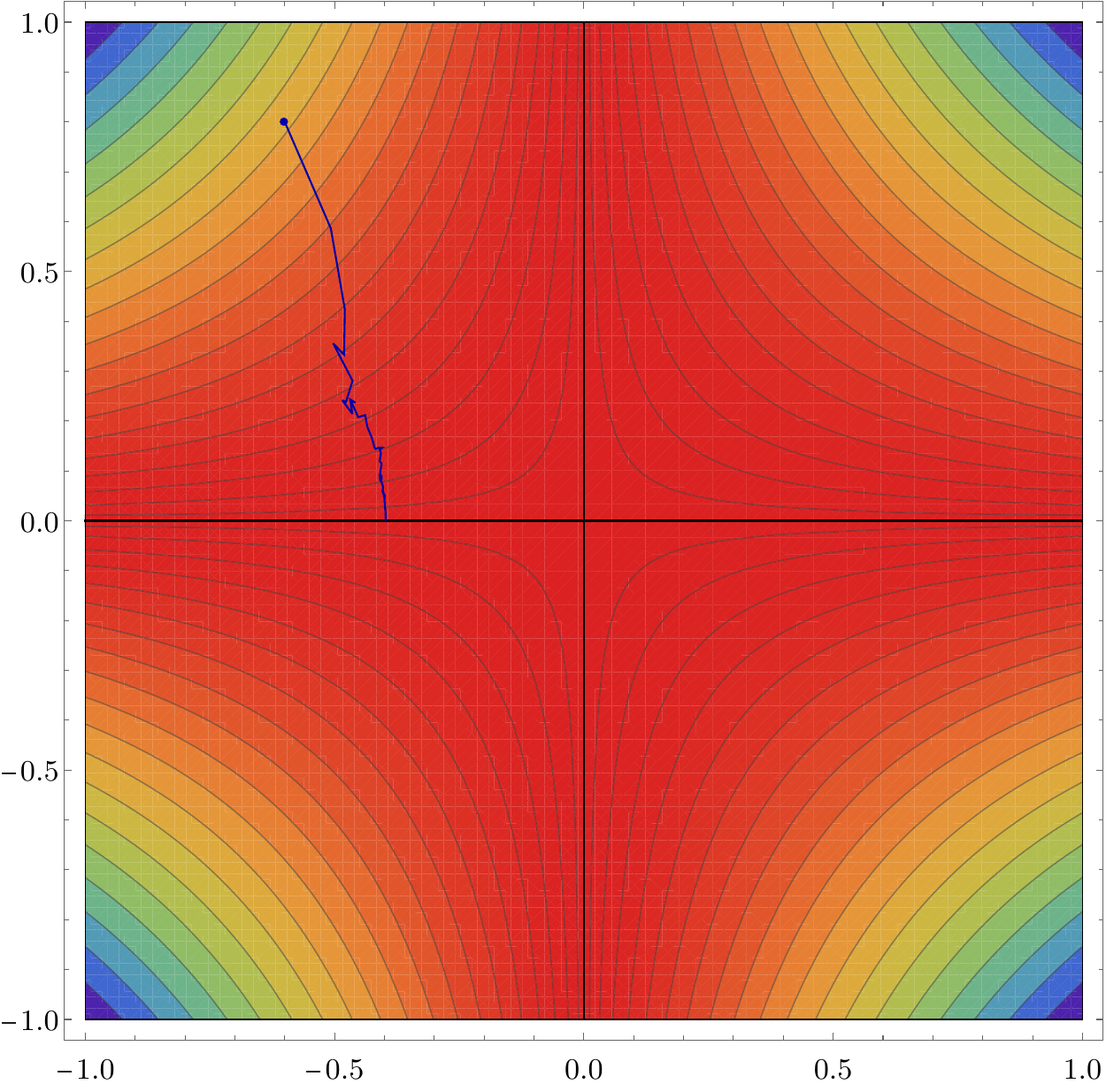}
\quad
\includegraphics[width=.48\textwidth]{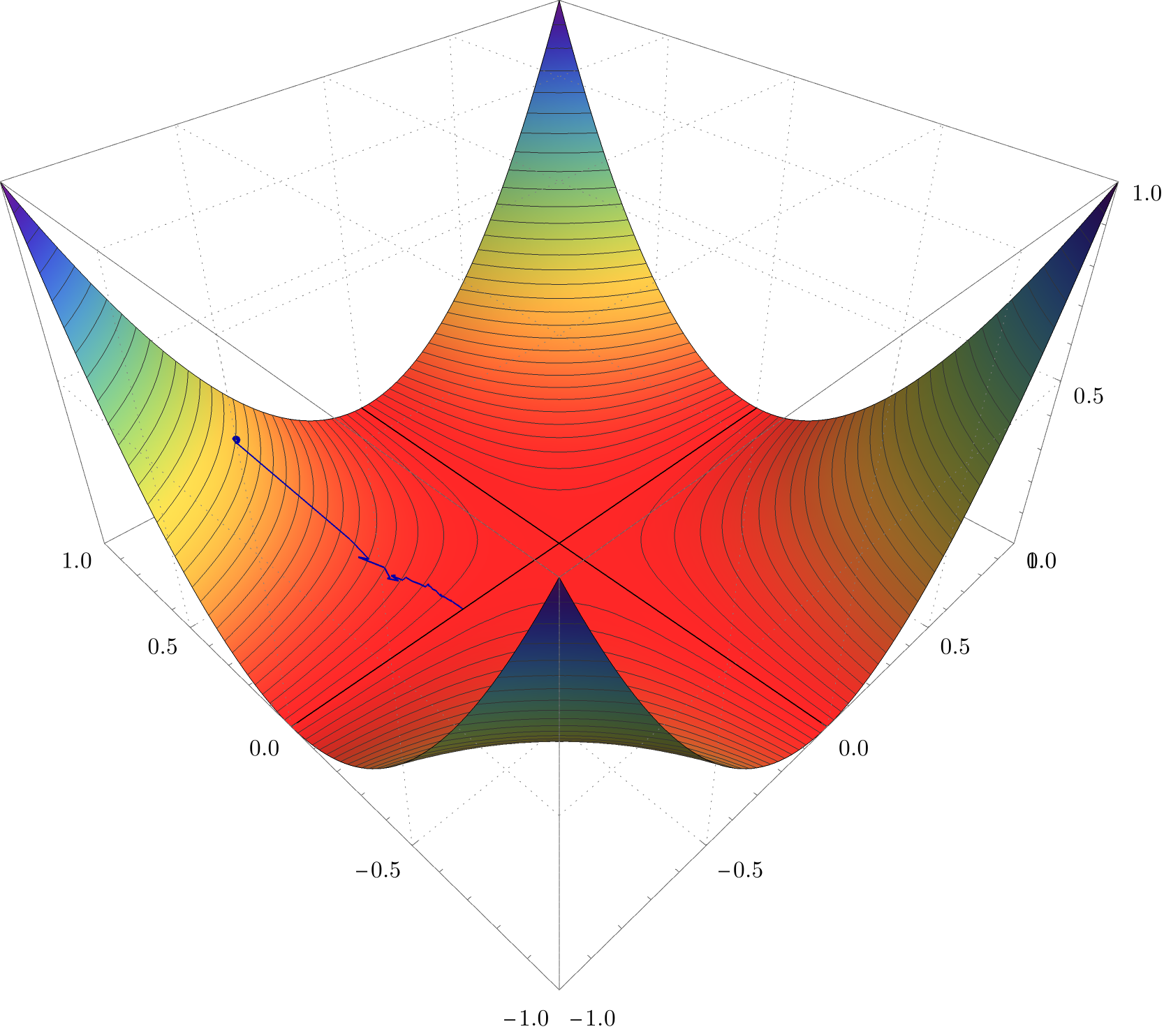}}%
\caption{Convergence of \ac{SMD} in a coherent problem with a unique minimizer (top) and a weakly coherent problem with a non-convex minimum set (bottom).
In both cases, the minimum set of $\obj$ is plotted in solid black.}%
\label{fig:portraits}
\end{figure}



\begin{figure}[tbp]
\includegraphics[width=.48\textwidth]{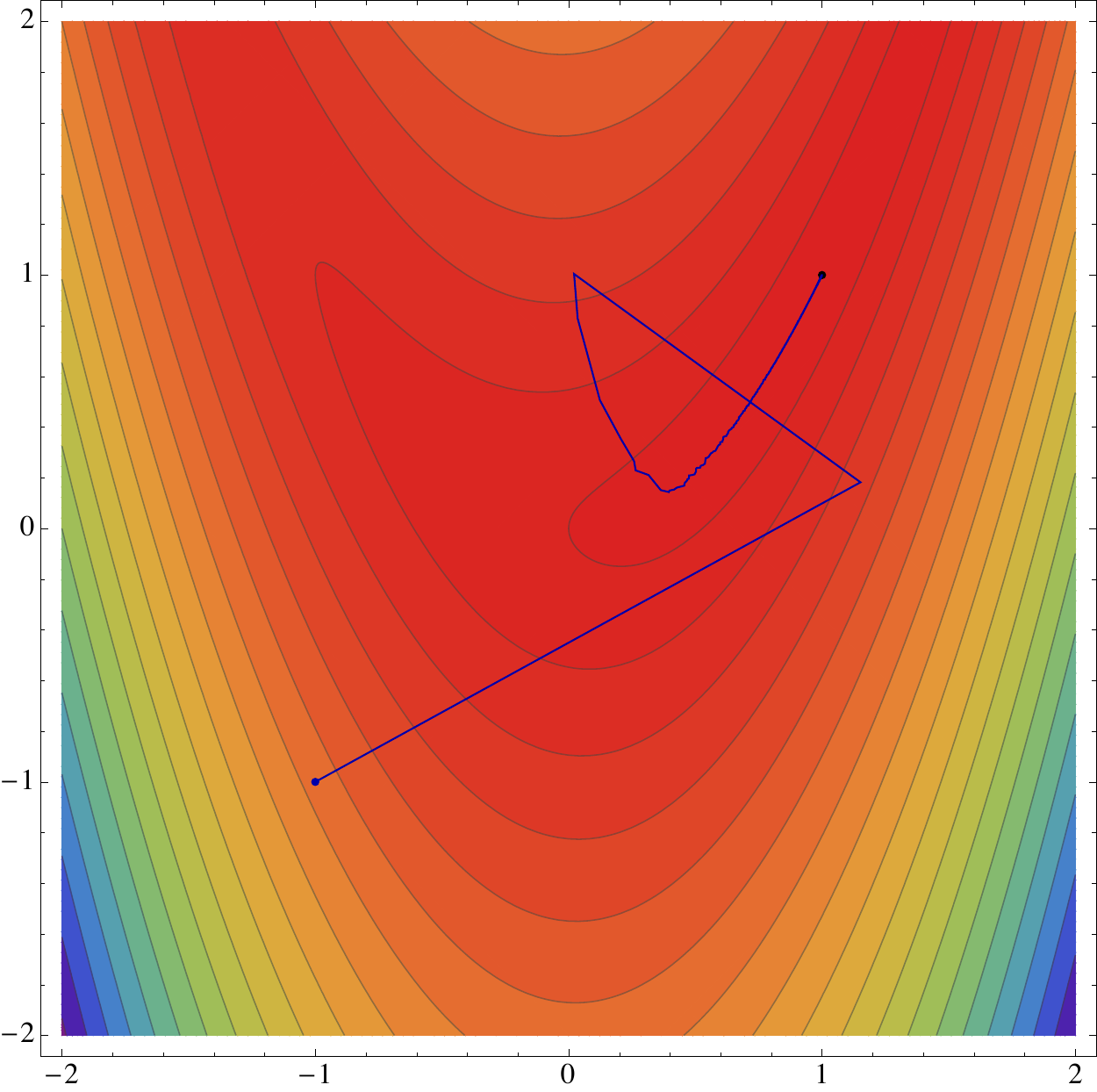}
\hfill
\includegraphics[width=.48\textwidth]{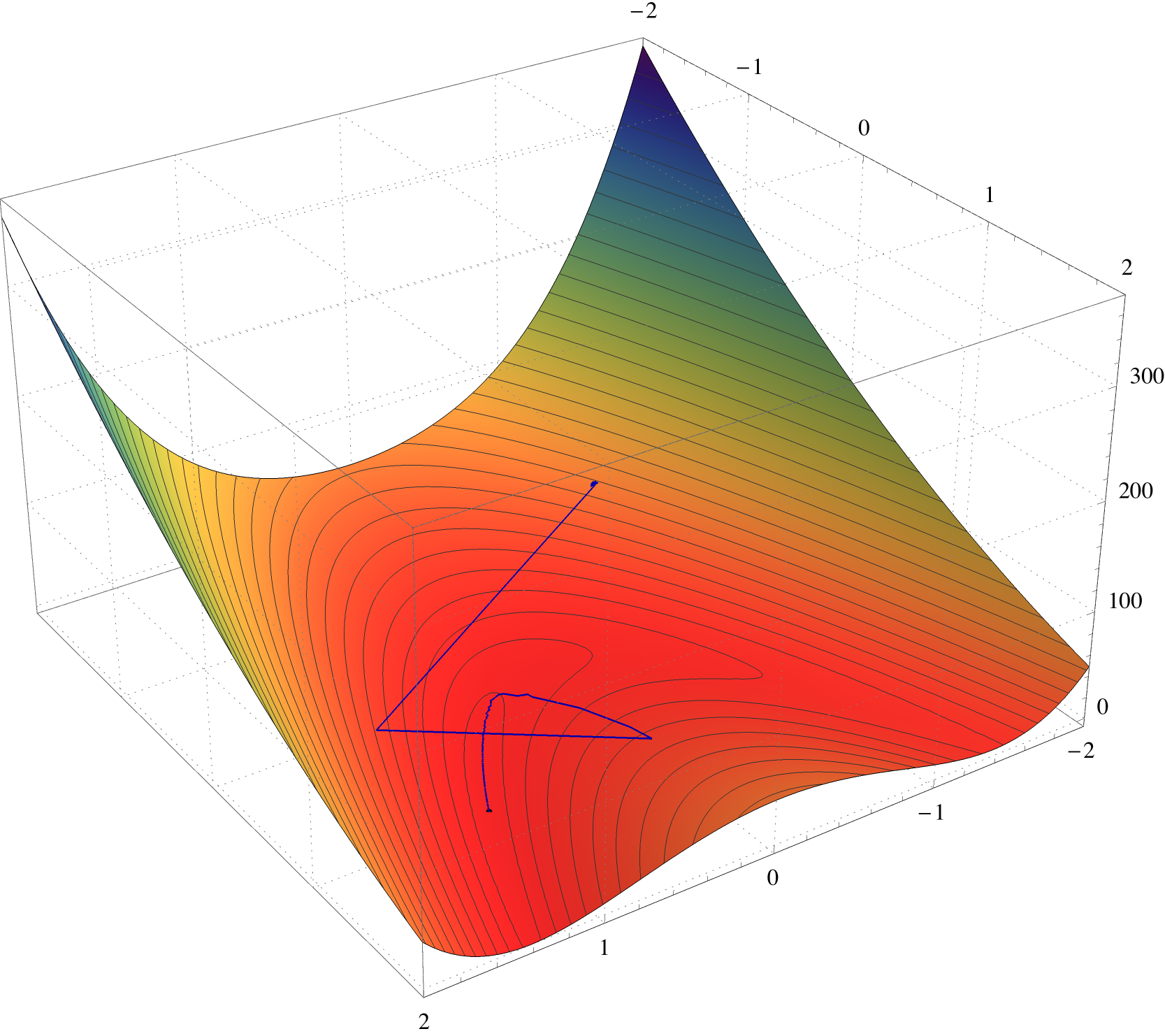}
\caption{Convergence of the \ac{SMD} algorithm in the Rosenbrock test with $\vdim=2$ degrees of freedom.}%
\label{fig:Rosenbrock}
\end{figure}


For illustration purposes, we first focus on a low-dimensional example with $\vdim=2$ degrees of freedom and algorithmic parameters as in \cref{fig:portraits}.
Despite the lack of global coherence, the simulated trajectories of \eqref{eq:SMD} quickly reach the Rosenbrock valley and eventually converge to the minimum of $\obj$;
a typical such trajectory is shown in \cref{fig:Rosenbrock}.
Subsequently, in \cref{fig:rates}, we run a series of tests on the Rosenbrock function for $\vdim=10^{3}$ and $\vdim=10^{4}$ degrees of freedom.
Because the calculation of the gradient becomes increasingly difficult as $\vdim$ grows large, we take the approach of \cref{ex:distributed} and, at each iteration $\run=\running$ of the algorithm, we randomly pick an integer between $1$ and $\vdim$ and calculate the gradient of
\(
\obj_{i}(\state)
	= 100 (\state_{i+1} - \state_{i})^{2} + (1 - \state_{i})^{2}.
\)


\begin{figure}[tbp]
\includegraphics[height=.315\textwidth]{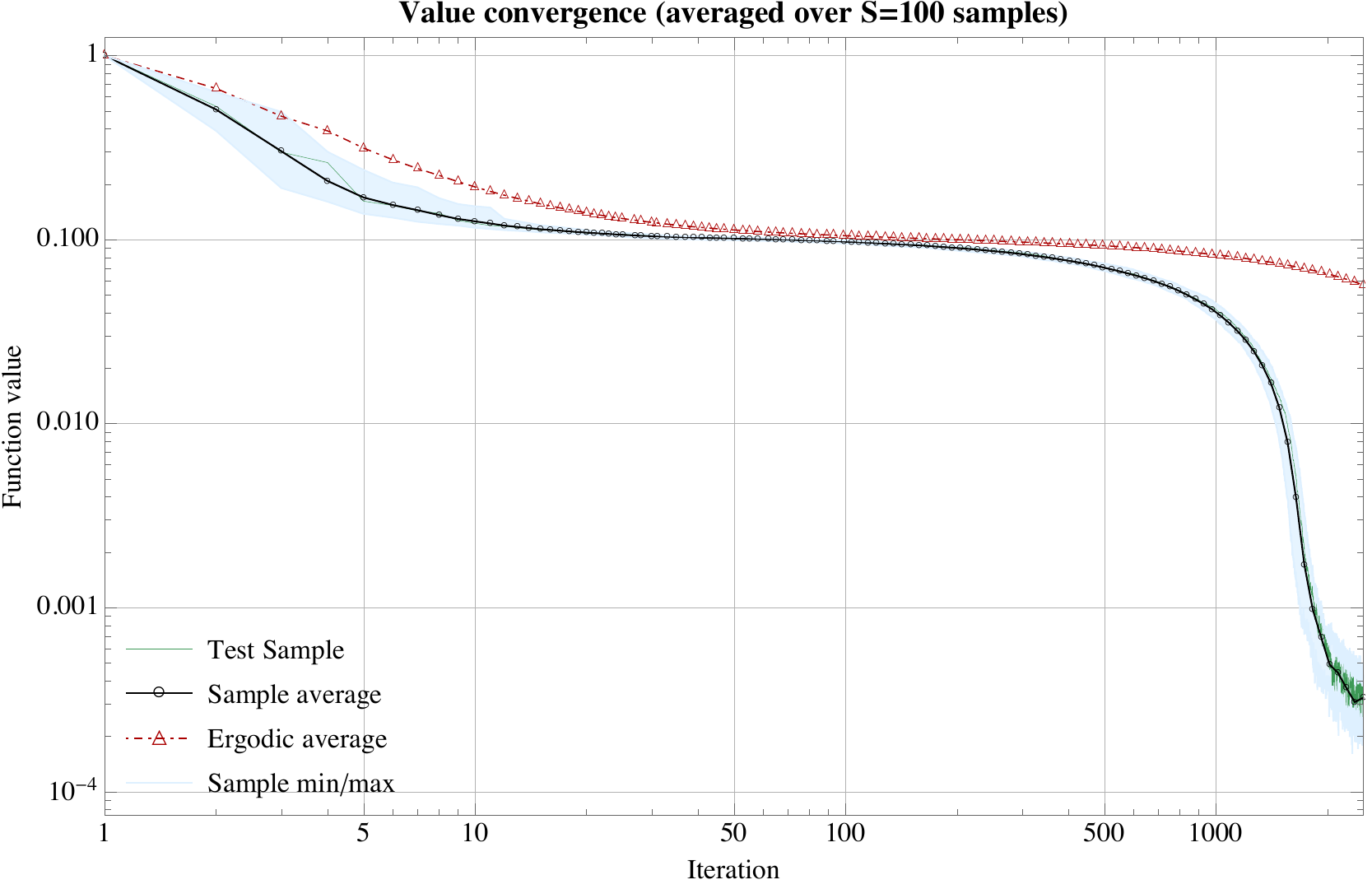}
\hfill
\includegraphics[height=.315\textwidth]{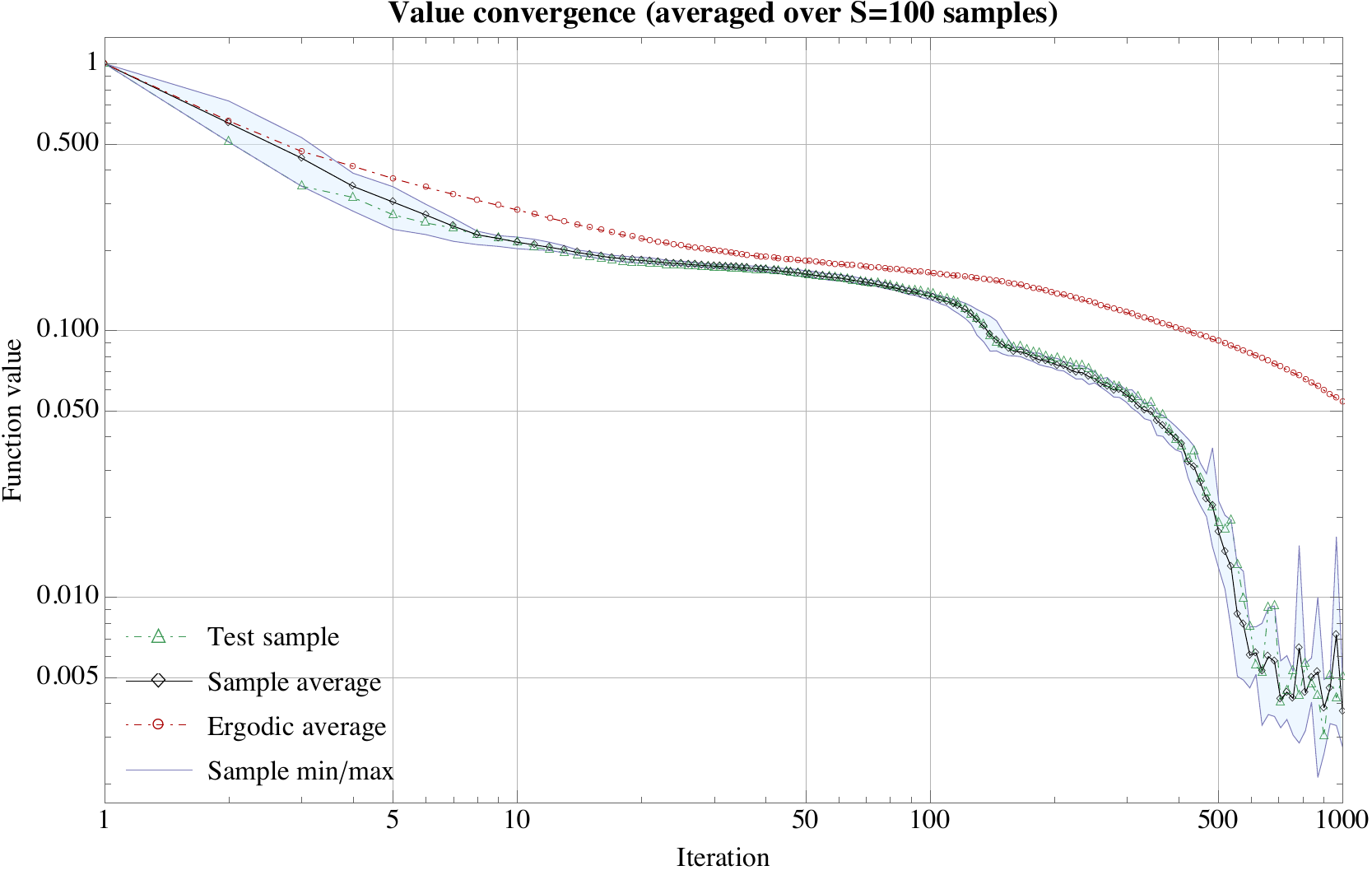}
\caption{Convergence speed of \ac{SMD} in the Rosenbrock benchmark for $\vdim=10^{3}$ and $\vdim=10^{4}$ degrees of freedom (left and right respectively).
The lightly shaded envelope indicates the best and worst realizations of the algorithm over $S=100$ sample runs;
the corresponding sample mean is represented by a solid black line.
For comparison purposes, we also plot the performance of the ergodic average $\bar\act_{\run} = \sum_{\runalt=1}^{\run} \step_{\runalt}\act_{\runalt} \big/ \sum_{\runalt=1}^{\run} \step_{\runalt}$ of $\act_{\run}$ (dashed red line).
Due to lack of convexity, the ergodic average of $\act_{\run}$ converges at a significantly slower rate.}%
\label{fig:rates}
\end{figure}


In so doing, we obtain the plots of \cref{fig:rates} where, for statistical significance, we report the findings of $S=100$ sample realizations.
For comparison purposes, we also include in the figure the performance of the ergodic average $\bar\act_{\run}$ of $\act_{\run}$ as defined in \cref{eq:ergodic}.
This sequence is the standard output of \acl{MD}/\acl{DA} schemes in convex problems;
however, in our non-convex setting, this averaging offers no tangible benefits.
This is seen clearly in \cref{fig:rates} where the convergence speed of $\bar\act_{\run}$ is considerably slower than that of the algorithm's last iterate.

Finally, in \cref{fig:concave}, we examine the convergence rate of \eqref{eq:SMD} for quadratic objective functions of the form
\begin{equation}
\label{eq:quadratic}
\obj(\state)
	= \frac{1}{2} \sum_{i,j=1}^{\vdim} Q_{ij} \state_{i}\state_{j}
	+ \sum_{i=1}^{\vdim} b_{i}\state_{i},
\end{equation}
with $Q = (Q_{ij})_{i,j=1}^{\vdim}$ negative-definite (so $\obj$ is concave).
When $\state$ is constrained to lie on the unit simplex of $\R^{\vdim}$, the minimization of $\obj$ is related to the maximum weight clique problem \cite{MPB02}:
this problem is well known to be NP-hard, so fast convergence to local minima of $\obj$ is essential in order to get reasonable approximate solutions.


\begin{figure}[tbp]
\includegraphics[width=.49\textwidth]{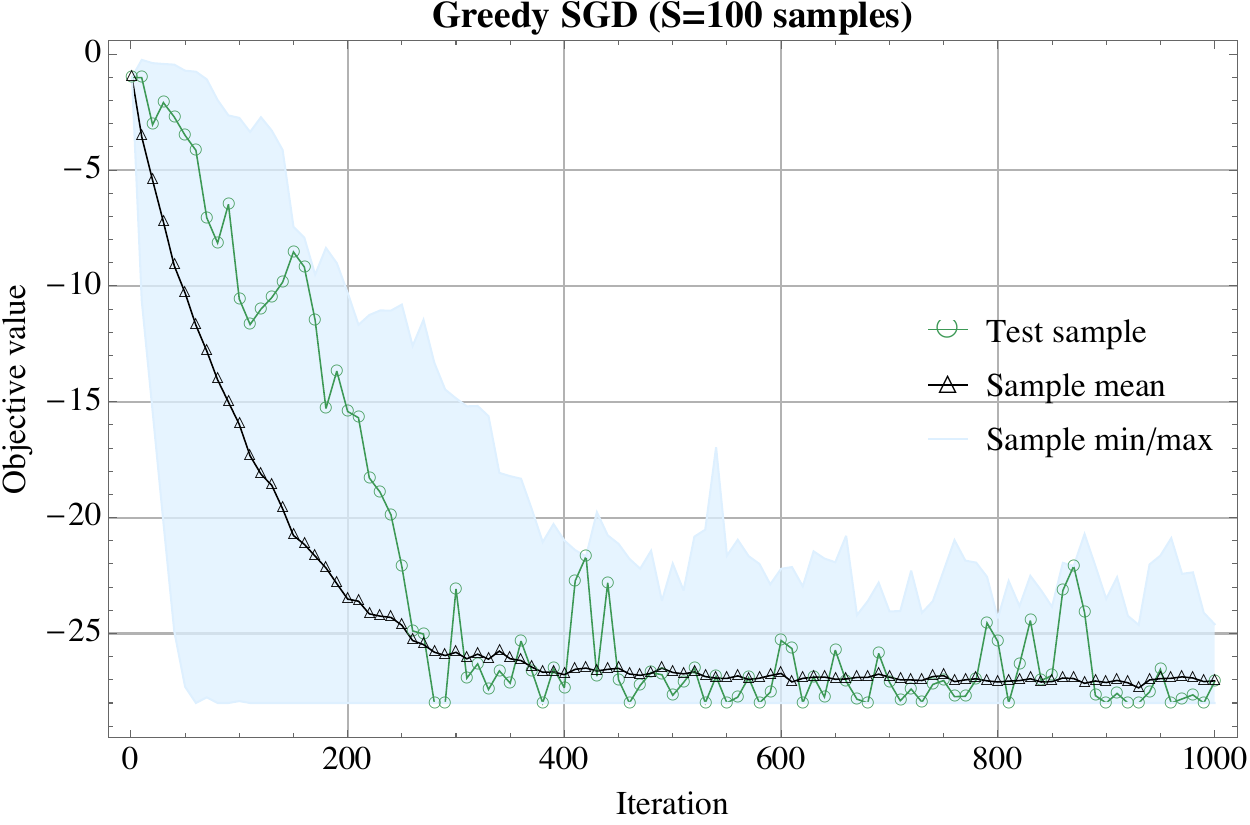}
\hfill
\includegraphics[width=.49\textwidth]{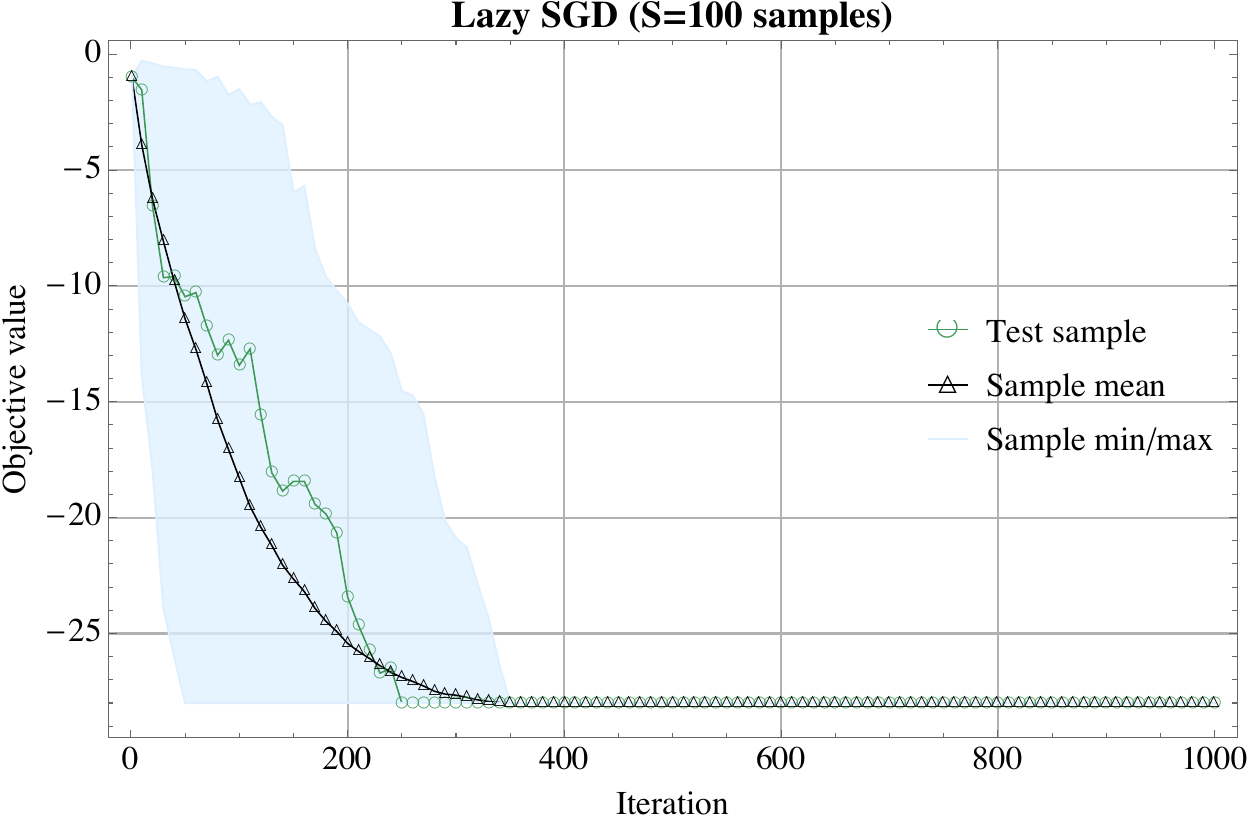}
\caption{Convergence of the lazy and greedy variants of \ac{SGD} in a quadratic minimization problem of the form \eqref{eq:quadratic} with $\vdim=100$.
The lightly shaded area traces the best and worst realizations of the algorithm over $S=100$ sample runs;
the corresponding sample mean is drawn as a solid black line.
In the dedicated runtime ($\run=1000$ iterations), the greedy variant still hasn't converged;
by contrast, even the worst realization of lazy \ac{SGD} has converged within approximately $300$ iterations.}%
\label{fig:concave}
\end{figure}


Using again the stochastic setup of \cref{ex:noisy}, we ran both \cref{alg:SGD} and its greedy variant \eqref{eq:SGD-greedy} for a concave quadratic objective of the form \eqref{eq:quadratic} with $\vdim=100$ and randomly drawn $Q$ and $b$.
As can be seen in \cref{fig:concave}, the lazy variant of \ac{SGD} converges within a finite number of iterations whereas the greedy variant still oscillates within the allocated time window.
This behavior is explained by \cref{thm:sharp-local} and the discussion that follows:
because the greedy variant essentially ``remembers'' only the last state, convergence within a finite number of iterations is not possible;
by contrast, the dual averaging that takes place in the lazy variant allows $\act_{\run}$ to converge in finite time, despite all the noise.

\section{Discussion}
\label{sec:discussion}

To conclude, we first note that our analysis can be extended to the study of stochastic variational inequalities with possibly non-monotone operators.
The notion of a variationally coherent problem still make sense for a variational inequality ``as is'', and the Fenchel coupling can also be used to establish almost sure convergence to the solution set of a variational inequality.
That said, there are several details that need to be adjusted along the way, so we leave this direction to future work.

Finally, we should also mention that another merit of \ac{SMD} is that, at least for (strongly) convex optimization problems~\cite{duchi2015asynchronous,recht2011hogwild}, the algorithm is amenable to asynchronous parallelization.
This is an increasingly desirable advantage, especially in the presence of large-scale datasets that are characteristic of ``big data'' applications requiring the computing power of a massive number of parallel processors. 
Although we do not tackle this question in this paper, the techniques developed here can potentially be leveraged to provide theoretical guarantees for certain non-convex stochastic programs when \ac{SMD} is run asynchronously and in parallel.

\appendix

\section{Elements of martingale limit theory}
\label{app:aux}
%
%
In this appendix, we state for completeness some basic results from martingale limit theory which we use throughout our paper.
The statements are adapted from \cite{HH80} where we refer the reader for detailed proofs.

We begin with a strong \acl{LLN} for \aclp{MDS}:

\begin{theorem}
\label{thm:mg_convergence1}
Let $M_{\run} = \sum_{\runalt=\start}^{\run} d_{\runalt}$ be a martingale with respect to an underlying stochastic basis $(\Omega,\filter,(\filter_{\run})_{\run=\start}^{\infty},\prob)$ and let $(\tau_{\run})_{\run=\start}^{\infty}$ be a nondecreasing sequence of positive numbers with $\lim_{\run\to\infty} \tau_{\run} = \infty$.
If $\sum_{\run=\start}^{\infty} \tau_{\run}^{-p} \exof{\abs{d_{\run}}^{p}\given\filter_{\run-1}} <\infty$ for some $p\in[1,2]$ \as, we have:
\begin{equation}
\label{eq:LLN}
\lim_{\run\to\infty} \frac{M_{\run}}{\tau_{\run}} = 0
	\quad
	\as
\end{equation}
\end{theorem}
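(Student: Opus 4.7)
The plan is to reduce the almost sure law of large numbers to the a.s.\ convergence of a suitably normalized martingale, and then to apply Kronecker's lemma. Concretely, define
\begin{equation}
N_{\run} = \sum_{\iRun=0}^{\run} \frac{d_{\iRun}}{U_{\iRun}},
\end{equation}
which is again a martingale adapted to $(\filter_{\run})_{\run=0}^{\infty}$ since $U_{\iRun}$ is (without loss of generality) predictable and the $d_{\iRun}$ are martingale differences. The goal becomes to show that $N_{\run}$ converges almost surely to a finite limit; once this is established, Kronecker's lemma (which says that if $\sum a_{\iRun}$ converges and $U_{\run}\uparrow\infty$, then $U_{\run}^{-1} \sum_{\iRun\leq\run} U_{\iRun} a_{\iRun} \to 0$) yields $M_{\run}/U_{\run}\to 0$ a.s.

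For the a.s.\ convergence of $N_{\run}$, the hypothesis provides $A_{\infty} \coloneqq \sum_{\iRun=0}^{\infty} U_{\iRun}^{-p} \exof{\abs{d_{\iRun}}^{p} \given \filter_{\iRun-1}} < \infty$ a.s., i.e.\ the predictable $p$-th variation of $N$ is finite almost surely. The natural strategy is to localize: for each $c > 0$, introduce the stopping time
\begin{equation}
\tau_{c} = \inf\braces[\big]{\run : A_{\run+1} > c},
\end{equation}
which is $\prob$-a.s.\ finite only on $\{A_{\infty}>c\}$ and satisfies $\tau_{c}\to\infty$ a.s.\ as $c\to\infty$. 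On $\{\tau_{c}=\infty\}$ the stopped martingale $N^{\tau_{c}}$ has bounded predictable $p$-th variation, so one can apply a Burkholder\textendash Davis\textendash Gundy-type inequality (for $p=2$ this is just orthogonality of increments; for $p\in[1,2)$ it is the von Bahr\textendash Esseen / Burkholder inequality) to obtain $\exof{\sup_{\run} \abs{N^{\tau_{c}}_{\run}}^{p}} \leq C_{p}\, c$. Hence $N^{\tau_{c}}$ is an $L^{p}$-bounded martingale and converges a.s.\ by Doob's martingale convergence theorem. Sending $c\to\infty$ and using that $\{\tau_{c}=\infty\}\uparrow\{A_{\infty}<\infty\}$, which has full probability by assumption, shows $N_{\run}$ itself converges a.s.

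The main obstacle is the step for $p\in[1,2)$, where one does not have the orthogonality of $L^{2}$ increments: the required inequality $\exof{\sup_{\run}\abs{N_{\run}^{\tau}}^{p}} \leq C_{p}\,\exof{\sum_{\iRun}\abs{d_{\iRun}^{\tau}/U_{\iRun}}^{p}}$ must be invoked with the correct constant independent of the stopping time. Once that is in hand, the rest (localization, Kronecker's lemma, verification that the hypothesis implies the conditional $L^{p}$ variation of $N$ is a.s.\ finite) is routine bookkeeping. The case $p=2$ is notably cleaner and can be treated first as a warm-up using only Doob's $L^{2}$ inequality applied to the stopped martingale.
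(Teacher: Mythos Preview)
The paper does not actually prove this theorem: it is stated in Appendix~A as a background result from martingale limit theory, with the proof delegated to the cited monograph of Hall and Heyde. So there is no ``paper's own proof'' to compare against.

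That said, your proposed argument is correct and is precisely the classical route one finds in that reference: reduce to a.s.\ convergence of the normalized martingale $N_{\run}=\sum_{\iRun\le\run} d_{\iRun}/U_{\iRun}$ via Kronecker's lemma, then establish convergence of $N_{\run}$ by localizing on the predictable $p$-th variation and invoking Doob's convergence theorem. Your identification of the only nontrivial step (the Burkholder/von Bahr--Esseen inequality bounding $\exof{\sup_{\run}\abs{N_{\run}^{\tau}}^{p}}$ by the sum of conditional $p$-th moments when $1\le p<2$) is accurate; for $p=2$ the orthogonality of increments makes it elementary, as you note. One cosmetic remark: the sequence $(U_{\run})$ here is deterministic, so predictability is automatic and the ``without loss of generality'' clause is unnecessary.
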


The second result we use is Doob's martingale convergence theorem:

\begin{theorem}
\label{thm:mg_convergence2}
If $M_{\run}$ is a submartingale that is bounded in $L^{1}$ \textpar{i.e. $\sup_{\run} \exof{\abs{M_{\run}}} < \infty$}, $M_{\run}$ converges almost surely to a random variable $M$ with $\exof{\abs{M}} < \infty$.
\end{theorem}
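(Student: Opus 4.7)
The plan is to follow the classical route via Doob's upcrossing inequality, which converts $L^{1}$ boundedness into almost sure convergence by controlling how often the submartingale can oscillate between any two levels.

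First, I would fix rationals $a<b$ and define, for each $\run$, the number $U_{\run}([a,b])$ of completed upcrossings of the interval $[a,b]$ by the sample sequence $M_{0},M_{1},\dotsc,M_{\run}$. The key lemma is Doob's upcrossing inequality: for any submartingale,
\begin{equation*}
(b-a)\,\exof{U_{\run}([a,b])} \leq \exof{(M_{\run}-a)^{+}} - \exof{(M_{0}-a)^{+}}.
\end{equation*}
The proof of this inequality is the only genuinely nontrivial combinatorial step: one builds a predictable $\{0,1\}$-valued strategy $H_{\iRun}$ that ``buys'' at the first downcrossing below $a$ and ``sells'' at the next upcrossing above $b$, observes that the martingale transform $(H\cdot M)_{\run}$ is itself a submartingale with nonnegative mean increment, and notes that $(H\cdot M)_{\run} \geq (b-a) U_{\run}([a,b]) - (M_{\run}-a)^{-}$, from which the inequality follows after taking expectations.

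Next, using the hypothesis $\sup_{\run}\exof{\abs{M_{\run}}}<\infty$, the right-hand side of the upcrossing inequality is bounded uniformly in $\run$ by $(\sup_{\run}\exof{\abs{M_{\run}}} + \abs{a})/(b-a)$. Hence the monotone limit $U_{\infty}([a,b]) = \lim_{\run\to\infty} U_{\run}([a,b])$ satisfies $\exof{U_{\infty}([a,b])}<\infty$ by monotone convergence, so $U_{\infty}([a,b])<\infty$ almost surely. Consequently, the event
\begin{equation*}
A_{a,b} = \braces*{\liminf_{\run\to\infty} M_{\run} < a < b < \limsup_{\run\to\infty} M_{\run}}
\end{equation*}
has probability zero, since on $A_{a,b}$ the sequence would necessarily produce infinitely many upcrossings of $[a,b]$. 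Taking the countable union $\bigcup_{a<b,\,a,b\in\Q} A_{a,b}$ shows that, with probability one, $\liminf_{\run} M_{\run} = \limsup_{\run} M_{\run}$ in the extended real line, i.e.\ $M_{\run}$ converges almost surely to some $[-\infty,+\infty]$-valued random variable $M$.

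Finally, Fatou's lemma applied to $\abs{M_{\run}}$ gives
\begin{equation*}
\exof{\abs{M}} = \exof*{\liminf_{\run\to\infty}\abs{M_{\run}}} \leq \liminf_{\run\to\infty}\exof{\abs{M_{\run}}} \leq \sup_{\run}\exof{\abs{M_{\run}}} < \infty,
\end{equation*}
so $M$ is integrable and in particular finite almost surely, which completes the argument. The main obstacle is the upcrossing inequality itself; once that combinatorial/martingale-transform estimate is in hand, everything else is a routine application of monotone convergence, countable subadditivity over rational pairs, and Fatou's lemma.
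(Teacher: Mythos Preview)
Your proof via Doob's upcrossing inequality is the standard, correct argument. Note, however, that the paper does not actually prove this statement: it is listed in \cref{app:aux} as a background result from martingale limit theory, with the remark that ``the statements are adapted from \cite{HH80} where we refer the reader for detailed proofs.'' So there is no paper proof to compare against; your outline simply supplies what the paper defers to the literature, and it does so along exactly the classical lines one would find in Hall--Heyde or any standard text.
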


The next result is also due to Doob, and is known as Doob's maximal inequality:

\begin{theorem}
\label{thm:mg_convergence3}
Let $M_{\run}$ be a non-negative submartingale and fix some $\eps>0$.
Then:
\begin{equation}
\probof{\sup\nolimits_{\run} M_{\run} \geq \epsilon}
	\leq \frac{\exof{M_{\run}}}{\epsilon}.
\end{equation}
\end{theorem}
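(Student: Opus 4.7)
The plan is to prove a finite-$n$ version of the statement (i.e.\ $\probof{\max_{0\le k\le \run} M_{\iRun} \geq \eps} \leq \exof{M_{\run}}/\eps$) and then, if desired, pass to the supremum via monotone convergence; since the bound on the RHS is $\exof{M_{\run}}$ for the same index $\run$, the natural reading is that the supremum on the LHS is a running maximum up to time $\run$.

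First I would introduce the obvious first-passage stopping time $\tau = \inf\{\iRun \leq \run : M_{\iRun} \geq \eps\}$, with the convention $\tau = \run$ on the complementary event. This $\tau$ is a bounded $\filter_{\iRun}$-stopping time, which is the key feature that lets us avoid any integrability subtleties. Let $A = \{\max_{0\le\iRun\le\run} M_{\iRun} \geq \eps\}$; by construction, on $A$ we have $M_{\tau} \geq \eps$, while on $A^{c}$ we have $\tau = \run$ and $M_{\tau} = M_{\run} \geq 0$ by non-negativity.

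Next I would invoke the optional stopping theorem for bounded stopping times on the submartingale $M_{\iRun}$, which yields $\exof{M_{\run}} \geq \exof{M_{\tau}}$. Splitting this expectation along $A$ and $A^{c}$, I get
\begin{equation*}
\exof{M_{\run}} \;\geq\; \exof{M_{\tau} \one_{A}} + \exof{M_{\tau} \one_{A^{c}}} \;\geq\; \eps \, \probof{A} + 0,
\end{equation*}
using $M_{\tau} \geq \eps$ on $A$ and $M_{\tau} \geq 0$ on $A^{c}$. Dividing by $\eps$ gives the stated inequality. As a fallback (and to sidestep even quoting optional stopping), one can argue by hand: partition $A$ into the disjoint first-crossing events $A_{\iRun} = \{M_{\iRun} \geq \eps,\; M_{\playalt} < \eps \text{ for } \playalt<\iRun\} \in \filter_{\iRun}$ for $\iRun = 0,\dots,\run$, and use the submartingale property $\exof{M_{\run}\one_{A_{\iRun}}} \geq \exof{M_{\iRun}\one_{A_{\iRun}}} \geq \eps\,\probof{A_{\iRun}}$; summing over $\iRun$ gives the same bound.

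There is no real obstacle here beyond bookkeeping: the only subtle point is the interpretation of the supremum on the LHS. If it is genuinely the running sup over \emph{all} $\run$, then one first bounds $\probof{\max_{0\le\iRun\le N} M_{\iRun} \geq \eps} \leq \exof{M_{N}}/\eps$ for every $N$, and then takes $N\to\infty$ using monotone convergence of the events $\{\max_{0\le\iRun\le N} M_{\iRun} \geq \eps\}\uparrow\{\sup_{\iRun} M_{\iRun} \geq \eps\}$, which requires the RHS to be understood as $\sup_{N}\exof{M_{N}}$ (consistent with the $L^{1}$-bounded case in \cref{thm:mg_convergence2}). In the use made of this result in \cref{sec:local}, the inequality is applied in its finite-$\run$ form, so this issue does not arise there.
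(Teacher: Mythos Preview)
Your argument is correct and is the standard textbook proof of Doob's maximal inequality. Note, however, that the paper does \emph{not} supply its own proof of this statement: \cref{thm:mg_convergence3} appears in \cref{app:aux} as one of several results ``state[d] for completeness'' with detailed proofs deferred to \cite{HH80}. So there is nothing to compare against beyond the observation that your proof is the classical one found in that and similar references. Your reading of the supremum as a running maximum up to time $\run$ is also the right one, and matches exactly how the inequality is invoked in the proof of \cref{thm:local} (cf.~\cref{eq:prob-bound2}).
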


Finally, a widely used variant of Doob's maximal inequality is the following:

\begin{theorem}
\label{thm:mg_convergence4}
With assumptions and notation as above, we have
\begin{equation}
\probof{\sup\nolimits_{\run} \abs{M_{\run}} \geq \epsilon}
	\leq \frac{\exof{M_{\run}^{2}}}{\epsilon^2}.
\end{equation}
\end{theorem}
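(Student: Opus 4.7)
The plan is to derive this $L^{2}$-type maximal inequality as an immediate corollary of the $L^{1}$ maximal inequality recorded in \cref{thm:mg_convergence3}, by passing from $M_{\run}$ to the squared process $M_{\run}^{2}$. The key structural observation is that, under the ambient hypotheses, $M_{\run}^{2}$ is itself a non-negative submartingale: if $M_{\run}$ is a martingale, Jensen's inequality applied to the convex map $x \mapsto x^{2}$ gives $\exof{M_{\run+1}^{2} \given \filter_{\run}} \geq \parens{\exof{M_{\run+1} \given \filter_{\run}}}^{2} = M_{\run}^{2}$; and if instead $M_{\run}$ is a non-negative submartingale (the literal reading of ``assumptions as above''), the same conclusion follows from the fact that $x \mapsto x^{2}$ is convex and nondecreasing on $[0,\infty)$, a property that preserves submartingales.

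With this in hand, I would simply apply \cref{thm:mg_convergence3} to the non-negative submartingale $N_{\run} := M_{\run}^{2}$ with the threshold $\epsilon^{2} > 0$, which yields
\begin{equation*}
\probof{\sup\nolimits_{\run} M_{\run}^{2} \geq \epsilon^{2}}
    \leq \frac{\exof{M_{\run}^{2}}}{\epsilon^{2}}.
\end{equation*}
The last step is a mere rewriting of the event on the left-hand side: since $x \mapsto x^{2}$ is strictly monotone on $[0,\infty)$, we have the set-theoretic identity $\braces{\sup_{\run} M_{\run}^{2} \geq \epsilon^{2}} = \braces{\sup_{\run} \abs{M_{\run}} \geq \epsilon}$, which substituted into the above display delivers exactly the claimed bound.

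Given the directness of the argument, there is no substantive obstacle to overcome; the statement is precisely the classical $L^{2}$ Doob maximal inequality, and the proof amounts to a change of variables applied to the $L^{1}$ inequality already stated. The only point requiring any care at all is the justification that $M_{\run}^{2}$ is a submartingale, which, as noted, follows identically from Jensen under either plausible reading of the inherited hypotheses.
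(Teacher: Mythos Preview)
Your argument is correct and is precisely the standard textbook derivation of the $L^{2}$ maximal inequality from the $L^{1}$ version: pass to the non-negative submartingale $M_{\run}^{2}$ (via Jensen, under either reading of the inherited hypotheses), apply \cref{thm:mg_convergence3} with threshold $\epsilon^{2}$, and rewrite the event using $\sup_{\run} M_{\run}^{2} = \parens{\sup_{\run}\abs{M_{\run}}}^{2}$.

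There is no proof in the paper to compare against. The appendix explicitly states these martingale results ``for completeness'' and refers the reader to \cite{HH80} for detailed proofs; \cref{thm:mg_convergence4} is given without argument. Your proposal therefore supplies a (correct) proof where the paper offers none.
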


\section{Technical proofs}
\label{app:proofs}
%
%
In this appendix, we present the proofs that were omitted from the main text.
We begin with the core properties of the Fenchel coupling:

\begin{proofof}{Proof of \cref{lem:Fenchel}}
To prove the first claim, let $\state =\mirror(\dstate) = \argmax_{\alt\state\in\states}\{\braket{\dstate}{\alt\state} - \hreg(\alt\state)\}$, so $\dstate\in\pd \hreg(\state)$ from standard results in convex analysis \cite{Roc70}.
We thus get:
\begin{equation}
\label{eq:Fench-sub}
\fench(\base,\dstate) = \hreg(\base) + \hreg^{\ast}(\dstate) - \braket{\dstate}{\base}
	= \hreg(\base) - \hreg(\state) - \braket{\dstate}{\base - \state}.
\end{equation}
Since $\dstate \in \pd \hreg(\state)$ and $\hreg$ is $\strong$-strongly convex, we also have
\begin{equation}
\hreg(\state) + \tau\braket{\dstate}{\base - \state}
	\leq \hreg(\state + \tau(\base - \state))
	\leq (1-\tau) \hreg(\state) + \tau \hreg(\base) - \tfrac{1}{2} \strong \tau(1-\tau) \norm{\state - \base}^{2}
\end{equation}
for all $\tau\in[0,1]$, thereby leading to the bound
\begin{equation}
\label{eq:divbound}
\tfrac{1}{2} \strong(1-\tau) \norm{\state - \base}^{2}
	\leq \hreg(\base) - \hreg(\state) - \braket{\dstate}{\base - \state}
	= \fench(\base,\dstate).
\end{equation}
Our claim then follows by letting $\tau\to0^{+}$ in \eqref{eq:divbound}.

For our second claim, we start by citing a well-known duality principle for strongly convex functions \cite[Theorem 12.60]{RW98}:
If $f\from\R^{\vdim}\to\R\cup\{-\infty,+\infty\}$ is proper, lower semi-continuous and convex, its convex conjugate $f^{\ast}$ is $\sigma$-strongly convex if and only if $f$ is differentiable and satisfies
\begin{equation}
f(\alt\state)
	\leq f(\state) + \braket{\nabla f(\state)}{\alt\state - \state} + \frac{1}{2\sigma} \norm{\alt\state - \state}^{2}
\end{equation}
for all $\state,\alt\state\in\R^{\vdim}$.
In our case, if we extend $\hreg$ to all of $\vecspace$ by setting $\hreg\equiv+\infty$ outside $\states$, we have that $\hreg$ is $\strong$-strongly convex, lower semi-continuous and proper, so $(\hreg^{\ast})^{\ast} = \hreg$ \cite[Theorem 11.1]{RW98}.
It is also easy to see that $\hreg^{\ast}$ is proper, lower semi-continuous and convex (since it is a point-wise maximum of affine functions by definition), so the $\strong$-strong convexity of $\hreg = (\hreg^{\ast})^{\ast}$ implies that $\hreg^{\ast}$ is differentiable and satisfies
\begin{flalign}
\label{eq:smoothness}
\hreg^{\ast}(\alt\dstate)
	&\leq \hreg^{\ast}(\dstate)
	+ \braket{\alt\dstate - \dstate}{\nabla \hreg^{\ast}(\dstate)}
	+ \frac{1}{2\strong} \dnorm{\alt\dstate - \dstate}^{2}
	\\
	&= \hreg^{\ast}(\dstate)
	+ \braket{\alt\dstate - \dstate}{\mirror(\dstate)} +
	\frac{1}{2\strong} \dnorm{\alt\dstate - \dstate}^{2}
\end{flalign}
for all $\dstate,\alt\dstate\in\dstates$, where the last equality follows from the fact that $\nabla \hreg^{\ast}(\dstate) = \mirror(\dstate)$.
Therefore, substituting the preceding inequality in the definition of the Fenchel coupling, we obtain
\begin{flalign}
\fench(\state,\alt\dstate)
	&= \hreg(\state) + \hreg^{\ast}(\alt\dstate) - \braket{\alt\dstate}{\state}
	\notag\\
	&\leq \hreg(\state) + \hreg^{\ast}(\dstate) + \braket{\alt\dstate - \dstate}{\nabla \hreg^{\ast}(\dstate)} + \frac{1}{2\strong} \dnorm{\alt\dstate - \dstate}^{2} - \braket{\alt\dstate}{\state}
	\notag\\
	&= \fench(\state,\dstate) + \braket{\alt\dstate - \dstate}{\mirror(\dstate) - \state} + \frac{1}{2\strong} \dnorm{\alt\dstate - \dstate}^{2},
\end{flalign}
and our assertion follows.
\end{proofof}

We now turn to the recurrence properties of \ac{SMD}:

\begin{proofof}{Proof of \cref{prop:recurrence}}
Our proof proceeds step-by-step, as discussed in \cref{sec:recurrence}:

\setcounter{proofstep}{0}
\begin{proofstep}{Martingale properties of $\score_{\run}$}
By \cref{asm:L2} and the fact that finiteness of second moments implies finiteness of first moments, we get $\exof{\dnorm{\sobj(\state;\sample_{\run})}} < \infty$.
We then claim that $\noise_{\run} = \nabla\obj(\act_{\run}) - \nabla\sobj(\act_{\run};\sample_{\run})$ is an $L^{2}$-bounded \acl{MDS} with respect to the natural filtration of $\sample_{\run}$.
Indeed, we have:
\begin{enumerate}
\addtolength{\itemsep}{1ex}
\item
Since $\act_{\run}$ is $\filter_{\run-1}$-measurable and $\sample_{\run}$ is \ac{iid}, we readily get
\begin{flalign}
\exof{\noise_{\run} \given \filter_{\run-1}}
	&= \exof{\nabla\obj(\act_{\run}) - \nabla\sobj(\act_{\run};\sample_{\run}) \given \filter_{\run-1}}
	\notag\\
	&= \exof{\nabla\obj(\act_{\run}) - \nabla\sobj(\act_{\run};\sample_{\run}) \given \sample_{1},\dotsc,\sample_{\run-1}}
	\notag\\
	&= \nabla\obj(\act_{\run}) - \nabla\obj(\act_{\run})
	\notag\\
	&= 0.
\end{flalign}

\item
Furthermore, by \cref{asm:L2}, the $L^{2}$ norm of $\noise$ satisfies
\begin{flalign}
\label{eq:vbound}
\exof{\dnorm{\noise_{\run}}^{2} \given \filter_{\run-1}}
	&= \exof{\dnorm{\nabla\obj(\act_{\run}) - \nabla\sobj(\act_{\run};\sample_{\run})}^{2} \given \filter_{\run-1}}
	\notag\\
	&\leq 2\exof{\dnorm{\nabla\obj(\act_{\run})}^{2} \given \filter_{\run-1}}
	+ 2 \exof{\dnorm{\nabla\sobj(\act_{\run};\sample_{\run})}^{2} \given \filter_{\run-1}}
	\notag\\
	&\leq 2 \dnorm{\nabla\obj(\act_{\run})}^{2}
	+ 2 \vbound^{2}
	\notag\\
	&= 2\dnorm{\exof{\nabla\sobj(\act_{\run};\sample)}}^{2}
	+ 2\vbound^{2}
	\notag\\
	&\leq 2 \exof{\dnorm{\nabla\sobj(\act_{\run};\sample)}^{2}}
	+ 2\vbound^{2}
	\notag\\
	&\leq \noisevar,
\end{flalign}
where we set $\noisevar = 4\vbound^{2}$, and we used the dominated convergence theorem to interchange expectation and differentiation in the second line, and Jensen's inequality in the penultimate one.
\end{enumerate}
\end{proofstep}

\begin{proofstep}{Recurrence of $\eps$-neighborhoods}
We proceed to show that every $\eps$-neigh\-bor\-hood $\ball(\sols,\eps)$ of $\sols$ is recurrent under $\act_{\run}$.
To do so, fix some $\eps>0$ and assume ad absurdum that $\act_{\run}$ enters $\ball(\sols,\eps)$ only a finite number of times, so there exists some finite $\run_{0}$ such that $\dist(\sols,\act_{\run}) \geq \eps$ for all $\run\geq\run_{0}$.
Since $\states\setminus\ball(\sols,\eps)$ is compact, $\payv(\state) = -\nabla \obj(\state)$ is continuous in $\state$;
furthermore, letting $\sol$ be such that $\braket{\nabla\obj(\state)}{\state - \sol} = 0$ only if $\state\in\sols$ (cf.~\cref{def:VC}),  it follows that there exists some $\const \equiv \const(\eps) > 0$ such that
\begin{equation}
\braket{\payv(\state)}{\state - \sol}
	\leq - \const
	< 0
	\quad
	\text{for all $\state\in\states\setminus\ball(\sols,\eps)$}.
\end{equation}
To proceed, let $R = \max_{\state \in \states} \norm{\state}$ and set $\beta_{\run} = \step_{\run}^{2}/(2\strong)$.
Then, letting $\fench_{\run} = \fench(\sol,\score_{\run})$ and $\snoise_{\run} = \braket{\noise_{\run}}{\act_{\run}-\sol}$, \cref{lem:Fenchel} yields
\begin{flalign}
\fench_{\run+1}
	= \fench(\sol,\score_{\run+1})
	&= \fench(\sol, \score_{\run} + \step_{\run} \est\payv_{\run})
	\notag\\
	&\leq \fench(\sol, \score_{\run})
	+ \step_{\run} \braket{\payv(\act_{\run}) + \noise_{\run}}{\act_{\run} - \sol}
	+ \beta_{\run} \dnorm{\est\payv_{\run}}^{2}
	\notag\\
	&=\fench_{\run}
	+ \step_{\run} \braket{\payv(\act_{\run})}{\act_{\run} - \sol}
	+ \step_{\run} \snoise_{\run}
	+ \beta_{\run} \dnorm{\est\payv_{\run}}^{2}
	\notag\\
	&\leq \fench_{\run}
	- \step_{\run} \const
	+ \step_{\run} \snoise_{\run}
	+ \beta_{\run} \dnorm{\est\payv_{\run}}^{2}.
\end{flalign}
Hence, letting $\tau_{\run} = \sum_{\runalt=\run_{0}}^{\run} \step_{\runalt}$ and telescoping from $\run_{0}$ to $\run$, we get
\begin{flalign}
\label{eq:bound-Fench}
\fench_{\run+1}
	&\leq \fench_{\run_{0}}
	- \const \sum_{\runalt=\run_{0}}^{\run} \step_{\runalt}
	+ \sum_{\runalt=\run_{0}}^{\run} \step_{\runalt} \snoise_{\runalt}
	+ \sum_{\runalt=\run_{0}}^{\run} \beta_{\runalt} \dnorm{\est\payv_{\runalt}}^{2}
	\notag\\
	&= \fench_{\run_{0}}
	- \tau_{\run} \bracks*{\const - \frac{\sum_{\runalt=\run_{0}}^{\run} \step_{\runalt}\snoise_{\runalt}}{\tau_{\run}}}
	+ \sum_{\runalt=\run_{0}}^{\run} \beta_{\runalt} \dnorm{\est\payv_{\runalt}}^{2}
\end{flalign}

We now proceed to bound each term of \eqref{eq:bound-Fench}.
First, by construction, we have
\begin{equation}
\exof{\snoise_{\run} \given \filter_{\run-1}}
	= \exof{\braket{\noise_{\run}}{\act_{\run} - \sol} \given \filter_{\run-1}}
	= \braket{\exof{\noise_{\run} \given \filter_{\run-1}}}{\act_{\run} - \sol}
	= 0,
\end{equation}
where we used the fact that $\act_{\run}$ is $\filter_{\run-1}$-measurable.
Moreover, by Young's inequality, we also have
\begin{equation}
\abs{\snoise_{\run}}
	= \abs{\braket{\noise_{\run}}{\act_{\run} - \sol}}
	\leq \dnorm{\noise_{\run}} \norm{\act_{\run} - \sol}
	\leq 2R\dnorm{\noise_{\run}},
\end{equation}
where, as before, $R = \max_{\state\in\states} \norm{\state}$.
\cref{eq:vbound} then gives
\begin{equation}
\exof{\snoise_{\run}^{2} \given \filter_{\run-1}}
	\leq (2R)^{2} \exof{\dnorm{\noise_{\run}}^{2} \given \filter_{\run-1}}
	\leq 4 R^{2} \noisevar,
\end{equation}
implying in turn that $\snoise_{\run}$ is an $L^{2}$-bounded \acl{MDS}.
It then follows that $\snoise_{\run}$ satisfies the summability condition
\begin{equation}
\sum_{\run=\run_{0}}^{\infty} \frac{\exof{\abs{\step_{\run} \snoise_{\run}}^{2} \given \filter_{\run-1}}}{\tau_{\run}^{2}}
	\leq 4R^{2}\noisevar \sum_{\run=\run_{0}}^{\infty} \frac{\step_{\run}^{2}}{\tau_{\run}^{2}}
	< \infty,
\end{equation}
where the last inequality follows from the assumption that $\step_{\run}$ is square-summable.
Thus, by the \acl{LLN} for \aclp{MDS} (\cref{thm:mg_convergence1}), we get
\begin{equation}
\frac{\sum_{\runalt=\run_{0}}^{\run} \step_{\runalt} \snoise_{\runalt}}{\tau_{\run}}
	\to 0
	\quad
	\text{as $\run\to\infty$ \as},
\end{equation}
and, with $\sum_{\runalt=\run_{0}}^{\infty} \step_{\runalt} = \infty$, we finally obtain
\begin{equation}
\lim_{\run\to\infty} \tau_{\run} \bracks*{\const - \frac{\sum_{\runalt=\run_{0}}^{\run} \step_{\runalt}\snoise_{\runalt}}{\tau_{\run}}}
	= \infty
	\quad
	\as.
\end{equation}

For the last term of \eqref{eq:bound-Fench}, let $S_{\run} = \sum_{\runalt=\run_{0}}^{\run} \beta_{\runalt} \dnorm{\est\payv_{\runalt}}^{2}$, so $S_{\run}$ is $\filter_{\run}$-measurable and nondecrasing.
In addition, we have
\begin{equation}
\exof{S_{\run}}
	= \sum_{\runalt=\run_{0}}^{\run} \beta_{\runalt} \exof{\dnorm{\est\payv_{\runalt}}^{2}}
	\leq \vbound^{2} \sum_{\runalt=\run_{0}}^{\run} \beta_{\runalt}
	< \infty,
\end{equation}
with the last step following from \eqref{eq:vbound}.
This implies that $S_{\run}$ is an $L^{1}$-bounded submartingale so, by Doob's submartingale convergence theorem (\cref{thm:mg_convergence2}), $S_{\run}$ converges almost surely to some random variable $S_{\infty}$, \ie the last term of \eqref{eq:bound-Fench} is bounded.
Hence, combining all of the above, we finally obtain
\begin{equation}
\limsup_{\run\to\infty} \fench_{\run}
	= - \infty
	\quad
	\as,
\end{equation}
contradicting the positive-definiteness of the Fenchel coupling (cf.~\cref{lem:Fenchel}).
We thus conclude that $\act_{\run}$ enters $\ball(\sols,\eps)$ infinitely many times \as, as claimed.
\end{proofstep}

\begin{proofstep}{Recurrence of Fenchel zones}
Using the reciprocity of the Fenchel coupling (\cref{asm:reciprocity}), we show below that every Fenchel zone $\fball(\sols,\delta)$ of $\sols$ contains an $\eps$-neighborhood of $\sols$.
Then, given that $\act_{\run}$ enters $\ball(\sols,\eps)$ infinitely often (per the previous step), it will also enter $\fball(\sols, \delta)$ infinitely often.

To establish this claim, assume instead that there is no $\eps$-ball $\ball(\sols,\eps)$ contained in $\fball(\sols, \delta)$.
Then, for all $\runalt>0$ there exists some $\dstate_{\runalt}\in\dstates$ such that $\dist(\sols,\mirror(\dstate_{\runalt})) = 1/\runalt$ but $\fench(\sols, y_{\delta}) \geq \eps$.
This produces a sequence $(\dstate_{\runalt})_{\runalt=1}^{\infty}$ such that $\dist(\sols,\mirror(\dstate_{\runalt})) \to 0$ but $\fench(\sols, \dstate_{\runalt}) \geq \eps$.
Since $\states$ is compact and $\sols$ is closed, we can assume without loss of generality that $\mirror(\dstate_{\runalt})\to\base$ for some $\base\in\sols$ (at worst, we only need to descend to a subsequence of $\dstate_{\runalt}$).
We thus get
\begin{equation}
\eps
	\leq \fench(\sols,\dstate_{\runalt})
	\leq \fench(\base,\dstate_{\runalt}).
\end{equation}
However, since $\mirror(\dstate_{\runalt})\to\base$, \cref{asm:reciprocity} gives $\fench(\base,\dstate_{\runalt})\to0$, a contradiction.
We conclude that $\fball(\sols,\delta)$ contains an $\eps$-neighborhood of $\sols$, as required.
\end{proofstep}
\end{proofof}

\bibliographystyle{siam}
\bibliography{IEEEabrv,openloop,Bibliography-SMD}

\end{document}